\documentclass[a4paper,11pt]{article}

\usepackage{braket}
\usepackage{listings}
\usepackage{amsmath}
\usepackage{amsfonts}
\usepackage{amsthm}
\usepackage{amssymb}
\usepackage{ascmac}
\usepackage{fancybox}

\usepackage{newtxtext}

\usepackage{titlesec}
\usepackage{here}
\usepackage{bm}
\usepackage{caption}
\usepackage{enumerate}
\usepackage{pdfpages}

\usepackage{siunitx}
\usepackage{url}

\usepackage{a4wide}

\usepackage{graphicx}

\usepackage[linktocpage, colorlinks=true]{hyperref}
\usepackage[capitalise]{cleveref}

\theoremstyle{definition}
\newtheorem{theorem}{Theorem}[section]
\newtheorem{proposition}[theorem]{Proposition}
\newtheorem{corollary}[theorem]{Corollary}
\newtheorem{lemma}[theorem]{Lemma}
\newtheorem{definition}[theorem]{Definition}
\newtheorem{remark}[theorem]{Remark}
\newtheorem{example}[theorem]{Example}

\captionsetup[figure]{font=normalsize,labelsep=space}
\captionsetup[table]{font=normalsize,labelsep=space}

\makeatletter
\@addtoreset{figure}{section}
\@addtoreset{table}{section}
\@addtoreset{equation}{section}
\makeatother

\newcommand{\RR}{\mathbb{R}}

\newcommand{\ZZ}{\mathbb{Z}}
\newcommand{\NN}{\mathbb{N}}

\newcommand{\Di}{\mathcal{E}}

\newcommand{\calF}{\mathcal{F}}

\usepackage{docmute}

\begin{document}

\title{Characterization of Subordinate Symmetric Markov Processes}
\author{Ryuto Kushida}
\date{}

\maketitle

\begin{abstract}
  In this paper, we consider subordinate symmetric Markov processes 
  which correspond to non-killing Dirichlet forms
  enjoying heat kernel estimates on a metric measure space with the  volume doubling property.
  We obtain estimates of the jump kernel of the subordinate process and 
  establish equivalent conditions for the jump kernel following Liu-Murugan \cite{LM}.
  In particular, we clarify the scale of the jump kernel,
  which is different from the diffusion type.
  This result is appliable to non-subordinate processes by the transferring method, which uses
  stability of Dirichlet forms.
\end{abstract}
\noindent \textbf{Keywords:} 
symmetric jump processes, Dirichlet forms, heat kernel estimates, subordination

\smallskip
\noindent \textbf{2020 Mathematical Subject Classification:}
60J76,  
31C25,
31E05

\section{Introduction}

In recent years, 
there have been significant amount of researches on
jump processes.
Since generators of Markov processes with discontinuous paths are non-local operators, 
they are studied both from the areas of probability and partial differential equations.
One of the useful approaches to studying jump processes 
is the time change method via subordinators, called subordination.

A typical example of subordination
is the $\alpha$-stable process on $\RR^d$ 
which is obtained from Brownian motion by a specific subordinator.
Concerning applications, 
subordinate Brownian motions on $\RR^d$ or on a bounded domain have been investigated,  
and significant amount of properties such as heat kernel estimates and Harnack inequalities
have been revealed
\cite{KM2012},
\cite{KM2014},
\cite{KSV},
\cite{KSV2},
\cite{M}.
Subordination is applicable 
not only to Brownian motion but also 
to Markov processes on metric measure spaces by using the theory of Dirichlet forms.
To be precise, let $(M,d,\mu)$ be a metric measure space, that is, $(M,d)$ is a locally compact separable metric space and $\mu$ is a full support Radon measure on $M$.
\^Okura \cite{Okura} established a formula of Dirichlet forms for subordinate Markov processes.
Research on jump processes especially on fractals 
(for example, Sierpinski gasket in Figure \ref{fig:SierpinskiGasket} and Sierpinski carpet in Figure \ref{fig:SierpinskiCarpet})
has been developed
since the beginning of this century, 
see for instance Chen-Kumagai \cite{CK2003}.
A typical example is ``$\alpha$-stable process''
whose  Dirichlet form is given by
\begin{equation}
    \Di(f,f) = \int_{M \times M}(f(x) - f(y))^2 J(x,y)\mu(dx)\mu(dy), \quad    
    J(x,y) = \frac{C}{V(x,d(x,y))d(x,y)^\alpha},
\end{equation}
where $V(x,r) := \mu(B(x,r))$ and $B(x,r) := \{y \in M;d(x,y) < r\}$.
When $M = \RR^d$, this is a classical $\alpha$-stable process.
More generally, stable-like cases and mixed type cases are considered in many literatures
(see for instance \cite{BBCK}, \cite{BSS}, \cite{CK2003},  \cite{CKW2021}, \cite{K}, \cite{MS}).
Here stable-like means that the jump kernel $J(x,y)$ is within the range of constants times that of $\alpha$-stable process :
\begin{equation}
  \frac{C_1}{V(x,d(x,y))d(x,y)^{\alpha}} \le J(x,y) \le
  \frac{C_2}{V(x,d(x,y))d(x,y)^{\alpha}},
\end{equation}
and mixed type means that the jump kernel $J(x,y)$ is bounded by two scales $\alpha_1,\alpha_2$ :
\begin{equation}
  \frac{C_1}{V(x,d(x,y))d(x,y)^{\alpha_1}} \le J(x,y) \le
  \frac{C_2}{V(x,d(x,y))d(x,y)^{\alpha_2}}.
\end{equation}
When $\alpha < 2$, this Dirichlet form is regular and the corresponding stochastic process exists on the metric measure space in general.
Moreover, in recent years, the case $\alpha_2 \ge 2$ is considered.
Due to the existence of the sub-diffusion on fractals (see \cref{ex:subDiffusion}), 
these processes exist and Chen-Kumagai-Wang \cite{CKW2021} established stability of the heat kernel estimates under suitable perturbation. 
On the other hand, Chen-Kumagai-Wang \cite{CKW2022} and Bae-Kang-Kim-Lee \cite{BKKL2019}, \cite{BKKL} independently obtained 
the heat kernel estimates of a jump process having such general polynomial growth type.
Quite recently, Liu-Murugan \cite{LM} obtain the equivalence for jump kernels
deduced by subordination of diffusion processes with the so-called sub-Gaussian heat kernel estimates on metric measure spaces.

Based on these work,
we consider a subordinate process whose original process have jumps
unlike the setting of \cite{BKKL}
and obtain an equivalence for the jump kernel (Theorem \ref{thm:diffusion+jump}).
In order to describe our main result,
we first define heat kernel estimates of a diffusion process with jumps (diffusion+jump in short).
Let $\phi_c,\phi_j$ be scale functions that enjoys the doubling property (see Definition \ref{def:scaleFunction}).
$\phi_c$ represents the scale of time and space on the diffusion part
and $\phi_j$ represents that of the jump part. 
Set $\phi = \phi_c \wedge \phi_j$.
\begin{definition}(Diffusion+jump type heat kernel estimates \cite[Definition 1.11]{CKW2020})
  \label{def:diffusion+jumpHKE}

  For a regular Dirichlet form $(\Di,\calF)$ of the diffusion+jump type, 
  we say HK$_{-}$($\phi_c,\phi_j$) holds if there exists heat kernel $p(t,x,y)$ of $(\Di,\calF)$
  and 
  there exist a properly exceptional set $\mathcal{N}$ and 
  constants $c_1 ,c_2,c_3,c_4 > 0$ such that
  for all $t > 0$ and $x,y \in M \backslash \mathcal{N}$
  \begin{equation}
    \begin{split}
        &c_1\left(\frac{1}{V(x,\phi^{-1}(t))} 1_{\{\phi(c_2 r) \le t\}} + \frac{t}{V(x,r)\phi_j(r)} 1_{\{\phi(c_2 r) > t\}}\right)\\
    &\le p(t,x,y)\\
    &\le c_3\left(\frac{1}{V(x,\phi^{-1}(t))} \wedge (p^{(c)}(c_4t,x,y) + p^{(j)}(t,x,y))\right).
    \end{split}
    \label{eq:diffusion+jumpHKE}
  \end{equation}
Here $p^{(c)}(t,x,y)$ defined in (\ref{eq:diffusionHKE}) appears as the sub-Gaussian heat kernel and 
  $p^{(j)}(t,x,y)$ defined in (\ref{eq:JumpHK}) appears as the stable-like heat kernel.
See Example \ref{ex:diffusion+jump} for concrete examples of processes that enjoy (\ref{eq:diffusion+jumpHKE}).
\end{definition}

Next we introduce estimates of jump kernels.
For a diffusion+jump process satisfying the heat kernel estimates HK$_-(\phi_c,\phi_j)$, 
and for a scale function $\psi$,
the jump kernel of the subordinate process subordinate by $\overline{\phi}$
is obtained as follows.
  \begin{equation}
    J(x,y) \simeq \frac{1}{V(x,r)}\left(\frac{1}{\psi(r)} +\overline{\phi}(\phi_j(r)^{-1})\right), \quad r > 0,
      \label{eq:jumpKernelEstimateOfdiffusion+jump}
  \end{equation}
where $\overline{\phi}$ is the Bernstein function defined by 
\begin{align}
  \label{eq:Bernstein function}
  \overline{\phi}(\lambda) &:= \int_0^{\infty} (1 - e^{-\lambda t}) \frac{dt}{t\psi(\phi^{-1}(t))}, \quad \lambda > 0.
\end{align}
The following theorem is the main result in this article.
\begin{theorem}
  \label{thm:diffusion+jump}
  Let $(\Di,\calF)$ be a diffusion+jump type regular Dirichlet form on $L^2(M;\mu)$ that 
  satisfies the heat kernel estimates HK$_{-}(\phi_c,\phi_j)$.
  $X$ denotes the $\mu$-symmetric Hunt process corresponding to $(\Di,\calF)$.
  Given a scale function $\psi$, the following are equivalent. 
  \begin{enumerate}[(a)]
    \item There exists a pure jump type regular Dirichlet form $(\Di^j,\calF^j)$ whose jump kernel satisfies (\ref{eq:jumpKernelEstimateOfdiffusion+jump}).
    \item There exists a subordinator $S_t$\ such that the jump kernel of the subordinated process $X_{S_t}$ satisfies  (\ref{eq:jumpKernelEstimateOfdiffusion+jump}).
    \item The scale function $\psi$ satisfies
    \begin{equation}
      \int_0^1 \frac{\phi(s)}{s\psi(s)}ds < \infty.
    \end{equation}
  \end{enumerate}
\end{theorem}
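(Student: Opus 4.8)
The plan is to prove the cyclic chain of implications (a) $\Rightarrow$ (c) $\Rightarrow$ (b) $\Rightarrow$ (a). The pivot of the whole argument is the observation that condition (c) is exactly the condition under which the integral \eqref{eq:Bernstein function} converges, i.e. under which $\overline{\phi}$ is a genuine (finite) Bernstein function. First I would record this equivalence: since $1 - e^{-\lambda t} \simeq \lambda t$ for small $t$ and $\simeq 1$ for large $t$, and since $\int_1^\infty \frac{dt}{t\psi(\phi^{-1}(t))} < \infty$ holds automatically for scale functions (because $\psi \circ \phi^{-1}$ grows polynomially by the doubling property), finiteness of $\overline{\phi}(\lambda)$ for one — equivalently every — $\lambda > 0$ reduces to $\int_0^1 \frac{dt}{\psi(\phi^{-1}(t))} < \infty$. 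Substituting $t = \phi(s)$ and using the doubling property of $\phi$ to replace the Stieltjes measure $d\phi(s)$ by $\frac{\phi(s)}{s}\,ds$ turns this into $\int_0^1 \frac{\phi(s)}{s\psi(s)}\,ds < \infty$, which is (c).

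With this equivalence in hand, (a) $\Rightarrow$ (c) is immediate: if there is a regular Dirichlet form whose jump kernel obeys \eqref{eq:jumpKernelEstimateOfdiffusion+jump}, then the right-hand side of \eqref{eq:jumpKernelEstimateOfdiffusion+jump} must be finite for a.e. pair $(x,y)$; in particular $\overline{\phi}(\phi_j(r)^{-1}) < \infty$ for $r = d(x,y)$, which by the pivot forces (c). For (c) $\Rightarrow$ (b), condition (c) guarantees that $\overline{\phi}$ is a driftless Bernstein function with L\'evy measure $\nu(dt) = \frac{dt}{t\psi(\phi^{-1}(t))}$, so a subordinator $S_t$ with Laplace exponent $\overline{\phi}$ exists. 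Because $X$ is conservative (the Dirichlet form is non-killing), the subordinate process $X_{S_t}$ is a pure-jump symmetric Hunt process whose jump kernel is $J^{\overline{\phi}}(x,y) = \int_0^\infty p(s,x,y)\,\nu(ds)$. The task is then to estimate this integral and match it to \eqref{eq:jumpKernelEstimateOfdiffusion+jump}.

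I would carry out that estimate by writing $r = d(x,y)$ and splitting the $s$-integral at $s = \phi(r)$. For $s \lesssim \phi(r)$ the two-sided bound HK$_{-}(\phi_c,\phi_j)$ gives $p(s,x,y) \simeq \frac{s}{V(x,r)\phi_j(r)}$ (the sub-Gaussian contribution in the upper bound decays super-polynomially in $\phi_c(r)/s$ and is absorbed), while for $s \gtrsim \phi(r)$ one has $p(s,x,y) \simeq \frac{1}{V(x,\phi^{-1}(s))}$. The small-$s$ part produces $\frac{1}{V(x,r)\phi_j(r)}\int_0^{\phi(r)} \frac{ds}{\psi(\phi^{-1}(s))}$, which after comparison with the Bernstein integral (splitting \eqref{eq:Bernstein function} at $t = \phi_j(r)$) is comparable to $\frac{1}{V(x,r)}\overline{\phi}(\phi_j(r)^{-1})$; the overspill on $(\phi(r),\phi_j(r))$ is bounded, using monotonicity of $\psi\circ\phi^{-1}$, by $\frac{1}{V(x,r)\psi(r)}$. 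The large-$s$ part, after the substitution $s = \phi(u)$ and the doubling replacement $ds \simeq \frac{\phi(u)}{u}\,du$, becomes $\int_r^\infty \frac{1}{V(x,u)\psi(u)}\frac{du}{u}$, which by volume doubling is dominated by its lower endpoint and hence comparable to $\frac{1}{V(x,r)\psi(r)}$. Summing the two regimes yields \eqref{eq:jumpKernelEstimateOfdiffusion+jump}. Finally, (b) $\Rightarrow$ (a) follows by extracting the pure-jump part of the subordinate Dirichlet form: the Dirichlet form of $X_{S_t}$ is regular (subordinate Dirichlet forms are regular, see \^Okura \cite{Okura}), so its Beurling--Deny jump part $(\Di^j,\calF^j)$, given by $\Di^j(f,f) = \int_{M\times M}(f(x)-f(y))^2 J(x,y)\,\mu(dx)\mu(dy)$ with the same jump kernel $J$ as in (b), is itself a regular pure-jump Dirichlet form, and by hypothesis $J$ satisfies \eqref{eq:jumpKernelEstimateOfdiffusion+jump}.

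The main obstacle is the two-sided matching in (c) $\Rightarrow$ (b): the subordination integral must be shown comparable to the precise two-term expression in \eqref{eq:jumpKernelEstimateOfdiffusion+jump}, and the delicate point is the interplay between the two scales through $\phi = \phi_c \wedge \phi_j$ — the integrals naturally break at $\phi(r)$ whereas the Bernstein function breaks at $\phi_j(r)$, so the intermediate range $(\phi(r),\phi_j(r))$ must be carefully absorbed into the $\frac{1}{V(x,r)\psi(r)}$ term rather than double-counted. Controlling the sub-Gaussian tail in the upper HKE and justifying the Stieltjes-to-Lebesgue conversions uniformly via the doubling properties are the supporting technical steps.
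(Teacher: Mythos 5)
Your logical skeleton coincides with the paper's: the pivot equivalence between (c) and convergence of the integral defining $\overline{\phi}$ is exactly the paper's Lemma \ref{lem:LMLemma3.5}; the passage (c) $\Rightarrow$ (b) rests on \^Okura's formula (Theorem \ref{thm:Okura}) plus a two-sided jump-kernel estimate, which is the paper's Proposition \ref{prop:diffusion+jumpType}; and (a) $\Rightarrow$ (c), (b) $\Rightarrow$ (a) are handled the same way (the former because an everywhere-infinite kernel cannot be the density of a Radon jumping measure). Two cosmetic points: by Theorem \ref{thm:Okura}(iii) the subordinate form is already of pure-jump type, so no Beurling--Deny extraction is needed in (b) $\Rightarrow$ (a); and your replacement of the Stieltjes measure $d\phi(s)$ by $\frac{\phi(s)}{s}\,ds$ needs the paper's device $\Phi(t):=\int_0^t \phi(u)u^{-1}du\simeq\phi(t)$ (or a dyadic argument), since $\phi$ need not be absolutely continuous.

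The genuine gap is in your estimate of the subordination integral, precisely where the diffusion+jump setting differs from Liu--Murugan. You claim that for $s\lesssim\phi(r)$ the two-sided bound $p(s,x,y)\simeq \frac{s}{V(x,r)\phi_j(r)}$ holds, the sub-Gaussian contribution being ``absorbed''. This is false with uniform constants. Take $r<1$, so that $\phi(r)=\phi_c(r)\le\phi_j(r)$ by (\ref{eq:relationOfScaleFunction}), and take $s\simeq\phi_c(r)$: the exponent in (\ref{eq:diffusionHKE}) is then of order one, so $p^{(c)}(c_4s,x,y)\simeq 1/V(x,\phi_c^{-1}(s))\simeq 1/V(x,r)$, whereas your claimed bound is $\frac{s}{V(x,r)\phi_j(r)}\simeq\frac{\phi_c(r)}{\phi_j(r)}\cdot\frac{1}{V(x,r)}$, and the ratio $\phi_j(r)/\phi_c(r)$ is unbounded as $r\to0$ (e.g.\ $\phi_c(r)=r^2$, $\phi_j(r)=r^\alpha$ with $\alpha<2$). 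So the sub-Gaussian term genuinely dominates the jump term in part of the small-$s$ regime, and your ``small-$s$ part'' silently drops the contribution $\int_0^{\phi(r)}p^{(c)}(c_4s,x,y)\,\nu(ds)$. The correct treatment --- the one the paper uses in Lemma \ref{lem:diffusion+jumpTypeUpperBound} --- keeps this term separate and bounds it by $C/(V(x,r)\psi(r))$ via the diffusion-type subordination estimate, Proposition \ref{prop:diffusionType} (Bae--Kang--Kim--Lee), after observing that $\psi(\phi^{-1}(s))\ge\psi(\phi_c^{-1}(s))$ because $\phi\le\phi_c$. Your final two-term answer is unchanged, since $1/\psi(r)$ already appears on the right-hand side of (\ref{eq:jumpKernelEstimateOfdiffusion+jump}), but the exponential-tail computation behind Proposition \ref{prop:diffusionType} is the main technical ingredient of (c) $\Rightarrow$ (b) and cannot be replaced by pointwise absorption; as written, your upper bound for $J$ does not follow.
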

This result is a generalization of Liu-Murugan \cite[Theorem 2.3]{LM} to diffusion+jumps.
\begin{remark}
  \begin{enumerate}[(i)]
    \item When the original process is the pure jump type 
        and enjoys the stable-like heat kernel estimates (Definition \ref{def:JumpHKE}),
        we also obtain the counterpart of 
        Theorem \ref{thm:diffusion+jump} and 
        the jump kernel estimates (\ref{eq:jumpKernelEstimateOfdiffusion+jump})
        ; see Corollary \ref{cor:jump}.
    \item A novelty of this paper is clarifying a scale function $\overline{\phi}(\phi_j(r)^{-1})$
        in the general setting of diffusion+jumps. 
        This scale $\overline{\phi}(\phi_j(r)^{-1})$ is not comparable to $1/\psi(r)$ 
        which appeared in \cite{BKKL}, \cite{LM} (see Example \ref{ex:comparable}).
    \item The estimates (\ref{eq:jumpKernelEstimateOfdiffusion+jump}) based on Proposition \ref{prop:diffusion+jumpType} is new even for $\RR^n$.
      \end{enumerate}
\end{remark}
Below, we list up some notation used in this paper.
\begin{itemize}
  \item $f \simeq g$ for real valued functions $f,g$, if there exists a constant $C > 0$ such that $C^{-1}f \le g \le C f$.
  \item $a \wedge b := \min(a,b),\ a \vee b:= \max(a,b)$.
  \item $C_c(M)$ is a set of continuous functions on $M$ whose supports are compact.
\end{itemize}
We note that in this paper constants $C$ are changed from line to line.

The rest of this paper is organized as follows.
Section 2 is preliminaries for heat kernel estimates and subordinators.
In Section 3, we first calculate the jump kernel of the subordinate process.
Then we give the proof of our main theorem.

\begin{figure}[H]
  \centering
  \begin{minipage}[b]{0.48\columnwidth}
    \centering
    \includegraphics[width=0.9\columnwidth]{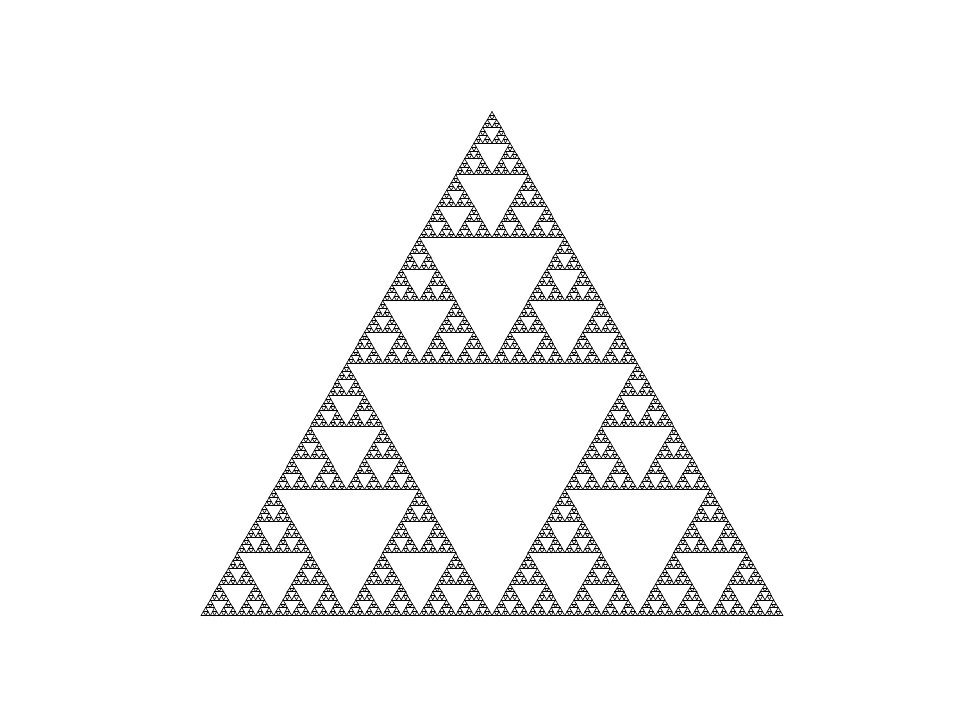}
    \caption{Sierpinski gasket}
    \label{fig:SierpinskiGasket}
  \end{minipage}
  \begin{minipage}[b]{0.48\columnwidth}
    \centering
    \includegraphics[width=0.9\columnwidth]{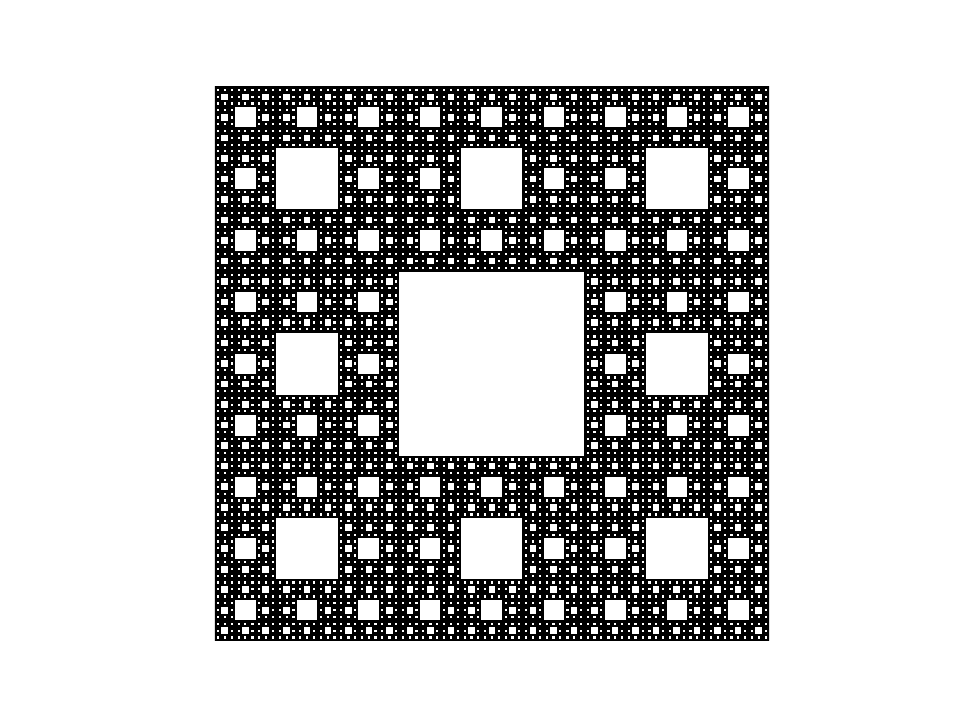}
    \caption{Sierpinski carpet}
    \label{fig:SierpinskiCarpet}
  \end{minipage}
\end{figure}

\section{Preliminaries}

\begin{definition}
  \begin{enumerate}[(i)]
    \item  We say that $(M,d,\mu)$ satisfies the volume doubling property (VD)
  if there exist constants $C \ge 1,d_2 > 0$ such that for all $x \in M$ and all $0 < r \le R$,
  \begin{equation}
    \frac{V(x,R)}{V(x,r)} \le C \left(\frac{R}{r}\right)^{d_2}.
  \end{equation}
  \item   We say that $(M,d,\mu)$ satisfies the reverse volume doubling property (RVD)
  if there exist constants $C > 0,d_1 > 0$ such that for all $x \in M$ and all $0 < r \le R$,
  \begin{equation}
   C \left(\frac{R}{r}\right)^{d_1} \le \frac{V(x,R)}{V(x,r)}.
  \end{equation}
  \end{enumerate}
\end{definition}
We always assume $(M,d,\mu)$ satisfies (VD) and do not need to assume (RVD).

\begin{definition}[Scale function]
  \label{def:scaleFunction}
  For constants $0 < \alpha_1 \le \alpha_2$, 
  we say a continuous function $\psi : [0,\infty) \to [0,\infty)$ satisfies $LU(\alpha_1,\alpha_2)$
  if $\psi$ is strictly increasing, $\psi(0) = 0$, $\psi(1) = 1$  and 
  there exists constants $C \ge 1$ such that
  \begin{align}
    C^{-1} \left(\frac{R}{r}\right)^{\alpha_1} \le \frac{\psi(R)}{\psi(r)} \le C \left(\frac{R}{r}\right)^{\alpha_2}, \qquad 0 < r \le R.
  \end{align}
  We call such function $\psi$ a scale function.
\end{definition}

\subsection{Dirichlet forms and heat kernel estimates}

We use notations and basic results of Dirichlet forms in Fukushima-Oshima-Takeda \cite{FOT}.
Let $\calF$ be a dense linear subspace of $L^2(M;\mu)$.
A Dirichlet form $(\Di,\calF)$ on $L^2(M;\mu)$ is a symmetric form $\Di: \calF \times \calF \to \RR$
satisfying the closedness property ($\calF$ is a Hilbert space under the norm $\Di_1(\cdot,\cdot) = \Di(\cdot,\cdot) + (\cdot,\cdot)_{L^2(M;\mu)}$)
and the unit contraction property 
($u \in \calF$ implies $v := (0 \vee u) \wedge 1 \in \calF$ and $\Di(v,v) \le \Di(u,u)$).
We say a Dirichlet form $(\Di,\calF)$ is regular
if $C_c(M) \cap \calF$ is dense in $C_c(M)$ with respect to the uniform norm
and dense in $\calF$ with respect to $\Di_1$.

From the Beurling--Deny decomposition (\cite[Theorem 3.2.1]{FOT}), 
a regular Dirichlet form  $(\Di,\calF)$ is uniquely expressed by
\begin{equation}
  \Di(u,u) = \Di^{(c)}(u,u) + \int_{M\times M \backslash \mathrm{diag}} (u(x) - u(y))^2 J(dx,dy) + \int_M u^2 dk,
\end{equation}
where $\Di^{(c)}$ is a symmetric form with the strongly local property
($\Di^{(c)}(u,v) = 0$ for $u,v \in \calF$ provided that $v$ is constant on a neighbourhood of $\mathrm{supp}\ u$)
and $J$\ is a symmetric Radon measure on $M \times M\backslash \mathrm{diag}$ (called the jumping measure)
and $k$ is a Radon measure (called the killing measure).
For a given regular Dirichlet form $(\Di,\calF)$ on $L^2(M;\mu)$,
by \cite[Theorem 7.2.1]{FOT}, 
there exists a $\mu$-symmetric Hunt process $X$ on $M$.
This process is unique up to the equivalence.

We are concerned with a regular Dirichlet form $(\Di,\calF)$ that is either a pure-jump type : 
\begin{align}
  \Di(f,f) &= \int_{M\times M \backslash \mathrm{diag}} (f(x) - f(y))^2 J(dx,dy),
\end{align}
or a diffusion+jump (i.e. no-killing inside) type : 
\begin{align}
  \Di(f,f) &= \Di^{(c)}(f,f) + \int_{M\times M \backslash \mathrm{diag}} (f(x) - f(y))^2 J(dx,dy),
\end{align}
Now, we will introduce heat kernel estimates.
For the following definition, we refer the reader to \cite[Theorem 1.11]{CKW2021}.
\begin{definition}[Jump type heat kernel estimates]
  \label{def:JumpHKE}
  Let $\phi_j$ be a scale function.
  For a  regular Dirichlet form $(\Di,\calF)$ of pure-jump type, 
  we say that HK($\phi_j$) holds if there exists a heat kernel $p(t,x,y)$ of $(\Di,\calF)$
  and 
  there exist a properly exceptional set $\mathcal{N}$ and 
  constants $c_1 ,c_2 > 0$ such that
  for all $t > 0$ and $x,y \in M \backslash \mathcal{N}$,
  \begin{align}
    c_1 p^{(j)}(t,x,y) \le p(t,x,y) \le  c_2p^{(j)}(t,x,y).
    \label{eq:JumpHKE}
  \end{align}
  Here, $p^{(j)}(t,x,y)$ is defined by
  \begin{equation}
    p^{(j)}(t,x,y) := \frac{1}{V(x,\phi_j^{-1}(t))} \wedge \frac{t}{V(x,d(x,y))\phi_j(d(x,y))}, \quad t > 0,x,y \in M.
    \label{eq:JumpHK}
  \end{equation}
\end{definition}

We will give examples of processes enjoying HK$(\phi_j)$.
\begin{example}
  \begin{enumerate}[(i)]
    \item Let $M = \RR^d$, $\mu$ be the Lebesgue measure and
    $Y_t$ be a symmetric $\alpha$-stable process where $\alpha \in (0,2)$ and 
    we put $\phi_j(r) = r^\alpha$.
    Then the Dirichlet form corresponding to $Y_t$ is known\cite[Example 1.4.1]{FOT} as 
    \begin{equation}
      \left\{
      \begin{aligned}
        \calF &:= \left\{u \in L^2(\RR^d); \int_{\RR^d}\int_{\RR^d} \frac{(u(x) - u(y))^2}{|x-y|^{d+\alpha}}dxdy < \infty \right\},\\
        \Di(u,v) &:= C_{d,\alpha} \int_{\RR^d}\int_{\RR^d} \frac{(u(x) - u(y))(v(x) - v(y))}{|x-y|^{d+\alpha}}dxdy,
      \end{aligned}
      \right.
    \end{equation}
    and $(\Di,\calF)$ satisfies HK$(\phi_j)$.
    \item Another examples are stable-like process
     on on $d$-sets (see \cite{BSS}, \cite{CK2003}, \cite{CK2008}).
     Let $G$ be the Sierpinski gasket whose one vertex is the origin.
     We define unbounded Sierpinski gasket $F := \bigcup_{m \in \NN} 2^m G$
     and $\mu$ be its Hausdorff measure (see Figure \ref{fig:unboundedSierpinskiGasket}).
     In \cite[p.3]{CKW2021}, put $d = \log (n+1)/\log 2, \alpha \in (0,2)$ and 
     \begin{equation}
      \left\{
      \begin{aligned}
        \calF &:= \left\{u \in L^2(F;\mu); \int_{F}\int_{F} \frac{(u(x) - u(y))^2}{|x-y|^{d+\alpha}}d\mu d\mu < \infty \right\},\\
        \Di(u,v) &:= \int_{F}\int_{F} \frac{(u(x) - u(y))(v(x) - v(y))}{|x-y|^{d+\alpha}}d\mu d\mu,
      \end{aligned}
      \right.
    \end{equation}
    then $(\Di,\calF)$ satisfies HK$(\phi_j)$ for $\phi_j(r) = r^\alpha$.
  \end{enumerate}
\end{example}

\begin{figure}[H]
  \centering
  \begin{minipage}[b]{0.48\columnwidth}
    \centering
    \includegraphics[width=0.9\columnwidth]{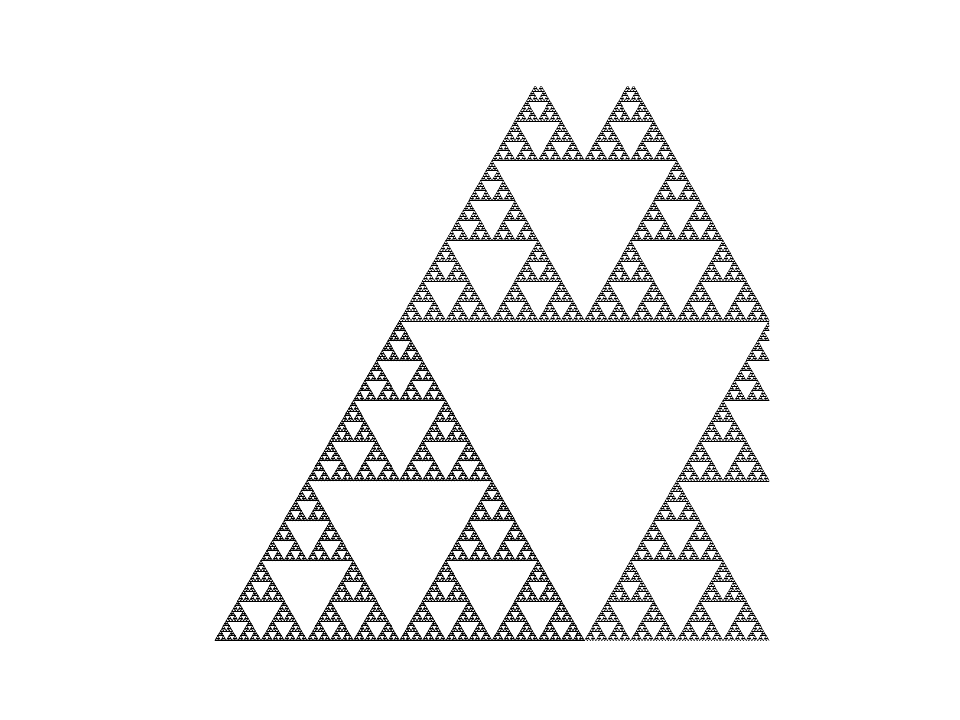}
    \caption{Unbounded Sierpinski gasket}
    \label{fig:unboundedSierpinskiGasket}
  \end{minipage}
  \begin{minipage}[b]{0.48\columnwidth}
    \centering
    \includegraphics[width=0.68\columnwidth]{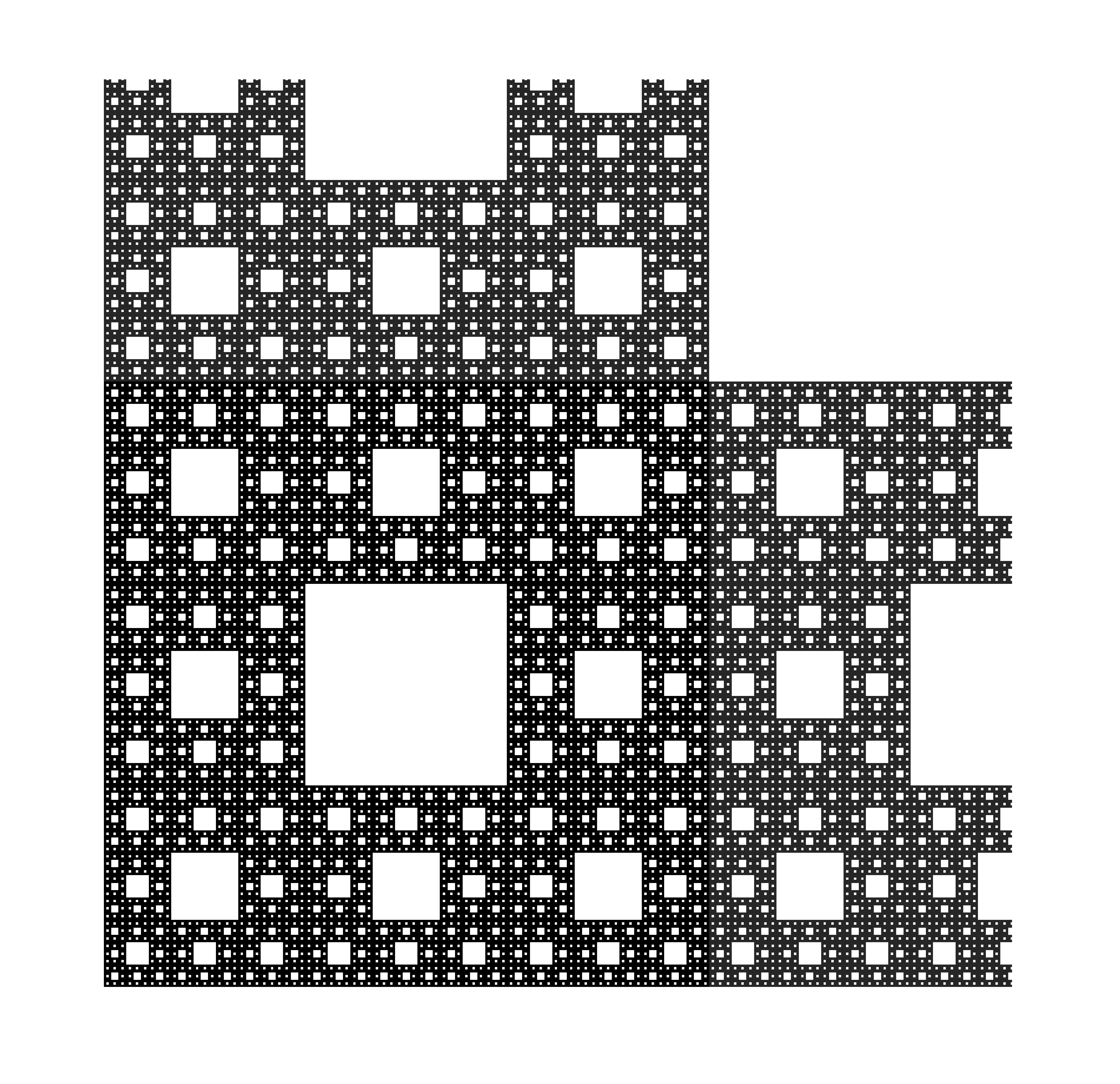}
    \caption{Unbounded Sierpinski carpet}
    \label{fig:unboundedSierpinskiCarpet}
  \end{minipage}
\end{figure}

\begin{remark}
  \begin{enumerate}[(i)]
    \item By \cite[Remark 1.12]{CKW2021}, the heat kernel $p(t,x,y)$ is 
    H\"older continuous with respect to $(x,y)$ for all $t > 0$ and (\ref{eq:JumpHKE}) holds for all $x,y \in M$ instead of $x,y \in M\backslash \mathcal{N}$ under (RVD).
     \item Equivalent conditions for $\mathrm{HK}(\phi_j)$ using function inequalities are known, see for insatnce \cite[Theorem 1.13]{CKW2021}.
  \end{enumerate}
 \end{remark}

Next, we introduce diffusion+jump type heat kernel estimates
defined in Definition \ref{def:diffusion+jumpHKE}.
We refer the reader to \cite[Definition 1.11]{CKW2020}.
Let $\phi_c,\phi_j$ be scale functions 
satisfying $LU(\beta_{1,c},\beta_{2,c})$, $LU(\beta_{1,j},\beta_{2,j})$
with $1 < \beta_{1,c} \le \beta_{2,c}$, $0 < \beta_{1,j} \le \beta_{2,j} < \infty$.
Moreover, we assume that $\phi_c$ and $\phi_j$ satisfy
\begin{equation}
  \phi_c(r) \le \phi_j(r), \quad r \in (0,1] \quad \mathrm{and}
   \quad \phi_c(r) \ge \phi_j(r), \quad r \in (1,\infty).
   \label{eq:relationOfScaleFunction}
\end{equation}
Note that $\phi_j$ here may be different from that of the jump type case.
We put $\phi(r) := \phi_c(r) \wedge \phi_j(r)$. 
$\phi$ is also a scale function satisfying $LU(\beta_{1,c} \wedge \beta_{1,j},\beta_{2,c} \vee \beta_{2,j})$.
Define 
\begin{equation}
  p^{(c)}(t,x,y) := \frac{1}{V(x,\phi_c^{-1}(t))}\exp \left(-\sup_{s > 0}\left\{\frac{d(x,y)}{s} - \frac{t}{\phi_c(s)}\right\}\right),\quad t > 0,x,y \in M.
  \label{eq:diffusionHKE}
\end{equation}
The function $p^{(c)}(t,x,y)$ appears in the study of diffusion processes on fractals (see \cite{B}\cite{BP}\cite{GT}).
Typical examples of heat kernels bounded by $p^{(c)}(t,x,y)$ are 
the Gaussian kernel 
and sub-Gaussian heat kernels that emerged in anomalous diffusion on fractals.
We give some concrete examples of sub-Gaussian heat kernels.
\begin{example} 
  \label{ex:subDiffusion}
  \begin{enumerate}[(i)]
    \item Brownian motion on the unbounded Sierpinski gasket in $\RR^n$ (Figure \ref{fig:unboundedSierpinskiGasket}) \cite{BP} :
    Let $d = \log(n+1)/\log2 , \beta = \log(n+3)/\log2, \phi_c(r) = r^\beta$.
    Then $V(x,r) \simeq r^d$ and there exists a diffusion (Brownian motion) whose heat kernel enjoys
    sub-Gaussian heat kernel estimates with 
    \begin{equation}
      p^{(c)}(t,x,y) \simeq \frac{1}{t^{d/\beta}}\exp \left(-\left(\frac{d^\beta(x,y)}{t}\right)^{1/(\beta - 1)}\right).
    \end{equation}
    \item Brownian motion on the unbounded Sierpinski carpet in $\RR^n$ (Figure \ref{fig:unboundedSierpinskiCarpet}) \cite{BB1989} \cite{BB} :
    Let $d = \log(3^n -1)/\log3$. 
    Then $V(x,r) \simeq r^d$ and there exist $d_w^c > 2$ and a diffusion (Brownian motion) whose heat kernel enjoys
    sub-Gaussian heat kernel estimates with 
    \begin{equation}
      p^{(c)}(t,x,y) \simeq \frac{1}{t^{d/d_w^c}}\exp \left(-\left(\frac{d^{d_w^c}(x,y)}{t}\right)^{1/(d_w^c - 1)}\right).
    \end{equation}
    \item On the tiling of 2-dimensional Sierpinski carpets (Figure \ref{fig:tilingCarpet}) :
    Let $K$ be the 2-dimensional compact Sierpinski carpet with vertices $(0,0), (0,1), (1,0), (1,1)$
    and set $K_{ij} = K + (i,j)$ for $i,j \in \ZZ$.
    We call $\bigcup_{i,j \in \ZZ} K_{ij}$ the tiling of 2-dimensional Sierpinski carpets.
    Let $\phi_c(r) = r^{d_w^c}1_{\{r \le 1\}} + r^{2}1_{\{r > 1\}}$.
    Since this space is roughly isometric to $\RR^2$ in the sence of \cite{BBK},
    there is a diffusion whose heat kernel satisfies
    sub-Gaussian heat kernel estimates.
  \end{enumerate}
\end{example}
\begin{figure}[H]
  \centering
  \begin{minipage}[b]{0.48\columnwidth}
    \centering
    \includegraphics[width=0.9\columnwidth]{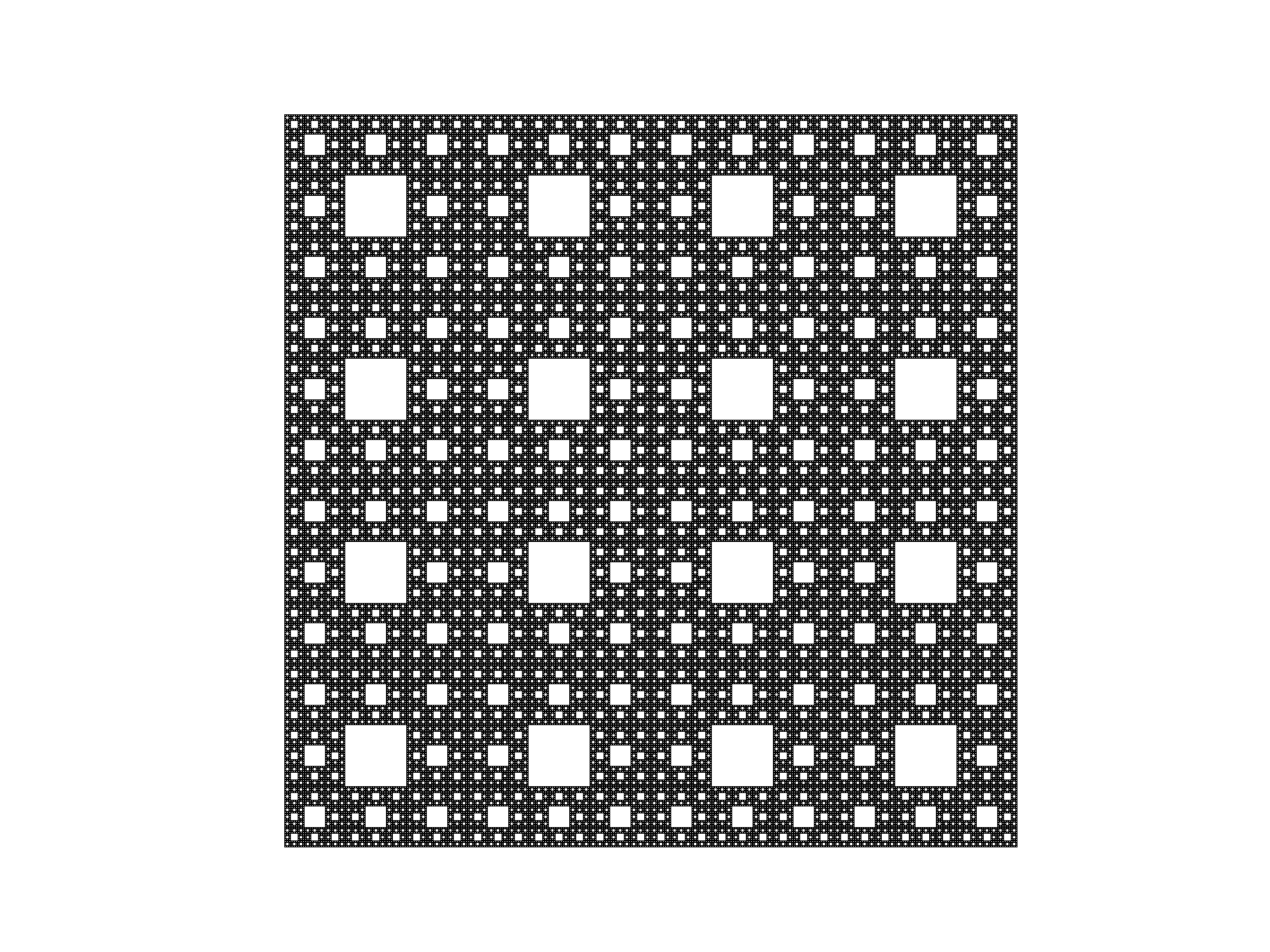}
    \caption{Tiling of Sierpinski carpets}
    \label{fig:tilingCarpet}
  \end{minipage}
\end{figure}

We give examples of processes enjoying HK$_-(\phi_c,\phi_j)$.
\begin{example}
  \label{ex:diffusion+jump}
  \begin{enumerate}[(i)]
    \item Let $M = \RR^d$ and $\mu$ be the Lebesgue measure.
    We take $B_t$ be Brownian motion and 
    $Y_t$ be an independent symmetric $\alpha$-stable process,
     where $\alpha \in (0,2)$.
    Put $\phi_c(r) := r^2$ and $\phi_j(r) := r^\alpha$.
    Then the sum of Brownian motion and the stable process $X_t := B_t + Y_t$ satisfies HK$_-(\phi_c,\phi_j)$ \cite[Theorem 2.13]{SV}.
    It is known\cite[Proposition 1.1]{CK2010} that the corresponding Dirichlet form $(\Di,\calF)$ is 
    \begin{equation}
      \left\{
      \begin{aligned}
        \calF &:= W^{1,2}(\RR^d)\\
        \Di(u,v) &:= \frac{1}{2} \int_{\RR^d} \nabla u \cdot \nabla v\ dx + C_{d,\alpha}\int_{\RR^d}\int_{\RR^d} \frac{(u(x) - u(y))(v(x) - v(y))}{|x-y|^{d+\alpha}}dxdy.
      \end{aligned}
      \right.
    \end{equation}
    
    \item Similarly, it is known \cite[Theorem 1.4]{CK2010} that the Dirichlet form defined by
    \begin{equation}
      \left\{
      \begin{aligned}
        \calF &:= W^{1,2}(\RR^d)\\
        \Di(u,v) &:= \int_{\RR^d} \nabla u \cdot A(x)\nabla v\ dx + \int_{\RR^d}\int_{\RR^d} (u(x) - u(y))(v(x) - v(y)) J(x,y) dxdy.
      \end{aligned}
      \right.
    \end{equation} satisfies HK$_-(\phi_c,\phi_j)$,
    where $A(x)$ is uniform elliptic 
    (i.e. $A(x)$ is symmetric, measurable and $C^{-1} |\xi|^2 \le \braket{\xi,A(x)\xi} \le C |\xi|^2$, for all $x$)
    and $J$ is a mixed type jump kernel in the sence of \cite[p.554]{CK2010}.
    An example of a mixed type jump kernel is 
    \begin{equation}
      J(x,y) := \int_{\alpha_1}^{\alpha_2} \frac{1}{|x-y|^{d+\alpha}}\nu(d\alpha),
    \end{equation}
    where $0 < \alpha_1 \le \alpha_2 < 2$ and $\nu$ be a probability measure on $[\alpha_1,\alpha_2]$.
    \item Let $(\Di^{(c)},\calF)$ be a Dirichlet form corresponding to Brownian motion
    on the $n$-dimensional unbounded Sierpinski gasket $M$ in $\RR^n$ (see \cref{ex:subDiffusion}).
    Put \begin{equation}
      \Di(u,v) := \Di^{(c)}(u,v) + \int_{M\times M}(u(x) - u(y))(v(x) - v(y)) \frac{1}{d(x,y)^{d+\alpha}}\mu(dx)\mu(dy),
    \end{equation}
    where $d = \log(n+1)/\log 2$, $\beta = \log(n+3)/\log 2$ and $\alpha \in (0,\beta)$.
    Then, $(\Di,\calF)$ is a regular Dirichlet form satisfying HK$_-(\phi_c,\phi_j)$ \cite[Example 7.2]{CKW2020}
    for $\phi_c(r) = r^\beta, \phi_j(r) = r^\alpha$.
  \end{enumerate}
\end{example}

\begin{remark}
  \begin{enumerate}[(i)]
    \item Equivalent conditions for $\mathrm{HK}_-(\phi_c,\phi_j)$ using function inequalities are known \cite[Theorem 1.13]{CKW2020}.
    \item In particular, if $(M,d,\mu)$ is connected and enjoys (RVD) and the chain condition,
      then HK$_-(\phi_c,\phi_j)$ leads HK$(\phi_c,\phi_j)$, namely 
      \begin{equation}
        \begin{split}
        &c_5\left(\frac{1}{V(x,\phi^{-1}(t))} \wedge (p^{(c)}(c_6t,x,y) + p^{(j)}(t,x,y))\right)\\
        &\le p(t,x,y)\\
        &\le c_3\left(\frac{1}{V(x,\phi^{-1}(t))} \wedge (p^{(c)}(c_4t,x,y) + p^{(j)}(t,x,y))\right).
        \end{split}
      \end{equation}
    \item There are pure jump Dirichlet forms that satisfy $\mathrm{HK}_{-}(\phi_c,\phi_j)$,
    see \cite[Definition 2.8]{BKKL} and \cite[Definition 1.10]{CKW2022}.
      Proposition \ref{prop:diffusion+jumpType} in the next section also holds for such processes.
\end{enumerate}
\end{remark}

\begin{remark}
  \label{rem:conservative}
    According to \cite[Proposition 3.1]{CKW2021},
    lower bounds of HK$(\phi_j)$ or HK$_-(\phi_c,\phi_j)$ imply that 
    $(\Di,\calF)$ is conservative
    provided that the process has no killing inside $M$.
\end{remark}

\subsection{Subordinate process}
A subordinator is a non-negative L\'evy process.
By the L\'evy-Khintchin theorem, 
a subordinator $S_t$ is characterized by 
a constant $b \ge 0$ and a L\'evy measure $\nu(dt)$ as 
\begin{equation}
  \begin{split}
    E[e^{-\lambda S_t}] &= e^{-t \overline{\phi}(\lambda)},\\
    \overline{\phi}(\lambda) &= b \lambda + \int_{0}^{\infty} (1 - e^{-\lambda t}) \nu(dt).
  \end{split}
\end{equation}
$\overline{\phi}$ is called the Bernstein function.

Let $X_t$ be a $\mu$-symmetric Hunt process associated with a regular Dirichlet form $(\Di,\calF)$ 
and $S_t$ be an independent subordinator characterized by $(b,\nu)$.
By \^Okura \cite[Theorem 2.1]{Okura},
we have the expression of the Dirichlet form $(\Di^S,\calF^S)$ 
associated with the subordinate process $X_{S_t}$.
In particular, we assume that $X_t$ have a heat kernel $p(t,x,y)$,
$X_t$ is conservative and $b = 0$ (i.e. driftless subordinator),
then, the following theorem holds.
\begin{theorem}
  \label{thm:Okura}
  \begin{enumerate}[(i)]
    \item The Dirichlet form $(\Di^S,\calF^S)$ is regular.
    \item $\calF \subset \calF^S$.
    \item $(\Di^S,\calF^S)$ is pure-jump type and has the following explicit formula
  \begin{equation}
      \Di^S(f,g) = \int_{M \times M} (u(x)- u(y))(v(x) - v(y)) J(x,y) d\mu(x)d\mu(y),\quad f,g \in \calF^S,
  \end{equation}
  where the jump kernel $J(x,y)$ is 
  \begin{equation}
    J(x,y) = \frac{1}{2}\int_0^\infty p(t,x,y)d\nu(t).
\end{equation}
\end{enumerate}
\end{theorem}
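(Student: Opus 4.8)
The plan is to handle parts (i)–(iii) by first invoking Ōkura's general theorem \cite[Theorem 2.1]{Okura}, which already yields regularity, the inclusion $\calF \subset \calF^S$, and a Beurling--Deny type decomposition of $\Di^S$ into a jump part and a killing part. The remaining task, and the genuinely new content under the present hypotheses (existence of a heat kernel, $b=0$, conservativeness), is to identify the jump kernel explicitly as $J(x,y) = \frac{1}{2}\int_0^\infty p(t,x,y)\,\nu(dt)$ and to show that the killing part vanishes. I would isolate this identification as the core computation and read off everything else from it.

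For the core computation I would pass through the spectral calculus. Let $L$ be the nonnegative self-adjoint generator of $(T_t)$ with spectral resolution $\{E_\lambda\}$, so that $T_t = \int_0^\infty e^{-t\lambda}\,dE_\lambda$ and $\Di(f,f) = \int_0^\infty \lambda\,d(E_\lambda f,f)$ on $\calF$. By the subordination formula (Phillips' theorem) the generator of $X_{S_t}$ is $\overline{\phi}(L) = \int_0^\infty \overline{\phi}(\lambda)\,dE_\lambda$, hence
\begin{equation}
  \Di^S(f,f) = \int_0^\infty \overline{\phi}(\lambda)\,d(E_\lambda f,f), \qquad \calF^S = \Big\{ f \in L^2(M;\mu) \; ; \; \int_0^\infty \overline{\phi}(\lambda)\,d(E_\lambda f,f) < \infty \Big\}.
\end{equation}
Inserting the Bernstein representation $\overline{\phi}(\lambda) = \int_0^\infty (1 - e^{-\lambda t})\,\nu(dt)$ (here $b = 0$) and applying Tonelli's theorem to the nonnegative integrand, I would obtain
\begin{equation}
  \Di^S(f,f) = \int_0^\infty \big( (I - T_t)f,\, f\big)\,\nu(dt).
\end{equation}

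Next I would unfold $\big((I - T_t)f,f\big)$ using the heat kernel. Writing $p_t(x) := \int_M p(t,x,y)\,\mu(dy) = T_t 1(x)$ and symmetrizing in $x,y$, the elementary identity
\begin{equation}
  \big((I-T_t)f,f\big) = \frac{1}{2} \int_{M\times M} (f(x)-f(y))^2\, p(t,x,y)\,\mu(dx)\mu(dy) + \int_M f(x)^2 (1 - p_t(x))\,\mu(dx)
\end{equation}
holds. Conservativeness gives $p_t(x) = 1$ for $\mu$-a.e.\ $x$, so the second (killing) term drops out, which is exactly what forces the pure-jump structure. Integrating the first term against $\nu(dt)$ and swapping the order of integration once more by Tonelli produces $\Di^S(f,f) = \int_{M\times M} (f(x)-f(y))^2 J(x,y)\,\mu(dx)\mu(dy)$ with $J$ as claimed; polarization then yields the bilinear formula in (iii). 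The inclusion $\calF\subset\calF^S$ in (ii) I would get from the linear bound $\overline{\phi}(\lambda) \le C(1+\lambda)$, obtained by splitting the Lévy integral at $t=1$ and using $1 - e^{-\lambda t} \le \lambda t \wedge 1$ together with $\int_0^1 t\,\nu(dt) + \nu([1,\infty)) < \infty$; this gives $\Di^S_1(f,f) \le C\,\Di_1(f,f)$.

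I expect the main obstacle to be part (i), the regularity of $(\Di^S,\calF^S)$: density of $C_c(M)\cap\calF^S$ in $C_c(M)$ for the uniform norm is immediate since it contains the regular core $C_c(M)\cap\calF$, but $\Di^S_1$-density of a core in $\calF^S$ is delicate, because $\Di^S$ is defined through the functional calculus of $L$ rather than by an a priori local expression. This is precisely the point settled in \cite[Theorem 2.1]{Okura}, and I would invoke it rather than reprove it, checking only that the present hypotheses (a heat kernel, $b=0$, conservativeness) fall within its scope.
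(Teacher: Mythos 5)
Your proposal is correct, but note that the paper itself offers \emph{no} proof of this statement: Theorem 2.11 is presented purely as a citation, namely as the specialization of \^Okura's result \cite[Theorem 2.1]{Okura} to the hypotheses that a heat kernel exists, that $X$ is conservative, and that $b=0$. What you have done is reconstruct the derivation that the paper leaves entirely implicit, and your reconstruction is sound: the passage through the spectral resolution and Phillips' theorem to get $\Di^S(f,f)=\int_0^\infty \overline{\phi}(\lambda)\,d(E_\lambda f,f)$, the Tonelli interchange (legitimate since all integrands are nonnegative), the symmetrization identity
\begin{equation*}
  \bigl((I-T_t)f,f\bigr) = \tfrac{1}{2}\int_{M\times M}(f(x)-f(y))^2 p(t,x,y)\,\mu(dx)\mu(dy) + \int_M f^2(1-p_t)\,d\mu,
\end{equation*}
and the observation that conservativeness ($p_t \equiv 1$ $\mu$-a.e., which in the paper's setting is supplied by Remark 2.10) is exactly what annihilates the killing term, are all correct and are essentially how \^Okura's own argument runs. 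Your proof of (ii) via the linear bound $\overline{\phi}(\lambda)\le C(1+\lambda)$, using $1-e^{-\lambda t}\le \lambda t \wedge 1$ and $\int_0^\infty (1\wedge t)\,\nu(dt)<\infty$, is also the standard and correct route. You are likewise right to flag (i), the $\Di^S_1$-density of a core, as the genuinely delicate point that cannot be read off from the functional calculus, and deferring it to \cite[Theorem 2.1]{Okura} puts you in exactly the same position as the paper. Two small points you could tighten: to conclude ``pure-jump type'' from your explicit formula you should appeal to the uniqueness of the Beurling--Deny decomposition (which requires the regularity from (i) first), and the off-diagonal local finiteness of $J(x,y)\mu(dx)\mu(dy)$ as a jumping measure deserves a remark, though it follows here from the heat kernel bounds. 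In short: your approach buys a self-contained derivation of (ii) and (iii) where the paper has only a reference, at the cost of no additional gaps.
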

The existence of the heat kernel and conservativeness 
are fulfilled 
for $(\Di,\calF)$ that satisfies HK$(\phi_j)$ or HK$_-(\phi_c,\phi_j)$
as mentioned in Remark \ref{rem:conservative}.

\section{Estimates of the subordinate jump kernel}\label{sec:EstimatesOfTheSubordinateJumpKernel}

In this section, we estimete jump kernel of subordinate processes 
whose original processes enjoy jump type or diffusion+jump type
heat kernel estimates.

For given scale functions $\phi$ and $\psi$, 
we always assume 
\begin{align}
  \int_0^\infty (1 \wedge t) \frac{dt}{t\psi(\phi^{-1}(t))} < \infty.
\end{align}
By the above integrability assumption, there exists a subordinator whose 
corresponding Bernstein function is 
\begin{align}
  \overline{\phi}(\lambda) &:= \int_0^{\infty} (1 - e^{-\lambda t}) \frac{dt}{t\psi(\phi^{-1}(t))}, \qquad \lambda > 0.
\end{align}
We will use this subordinator for our subordination.

\subsection{Pure jump type}

In this subsection, 
we will prove the following proposition.

\begin{proposition}
  \label{prop:jumpType}
  Assume that the regular Dirichlet form $(\Di,\calF)$ enjoys the jump type heat kernel estimate HK$(\phi_j)$ and 
  let $S$ be an independent subordinator whose Bernstein function is given in (\ref{eq:Bernstein function}).
  Then, the jump kernel $J(x,y)$ of the subordinate process satisfies 
  the following estimate: 
  \begin{align}
    J(x,y) &\simeq \frac{1}{V(x,r)} \overline{\phi}(\phi_j(r)^{-1}),
    \label{eq:jumpKernelEstimate}
  \end{align}
  where $r := d(x,y)$ and set $\phi := \phi_j$.
\end{proposition}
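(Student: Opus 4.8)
The plan is to start from Ôkura's formula (Theorem \ref{thm:Okura}), which expresses the subordinate jump kernel as
\[
  J(x,y) = \frac12 \int_0^\infty p(t,x,y)\,\nu(dt), \qquad \nu(dt) = \frac{dt}{t\psi(\phi^{-1}(t))},
\]
where $\nu$ is the L\'evy measure of the chosen subordinator. Since $(\Di,\calF)$ enjoys $\mathrm{HK}(\phi_j)$ and $\nu \ge 0$, integrating the two-sided bound $c_1 p^{(j)} \le p \le c_2 p^{(j)}$ against $\nu$ gives $J(x,y) \simeq I := \int_0^\infty p^{(j)}(t,x,y)\,\nu(dt)$, so it suffices to estimate $I$. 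Writing $r = d(x,y)$ and recalling $\phi = \phi_j$, the two terms defining $p^{(j)}$ in (\ref{eq:JumpHK}) cross over exactly at $t=\phi(r)$: the linear term $t/(V(x,r)\phi(r))$ is the minimum for $t \le \phi(r)$, while $1/V(x,\phi^{-1}(t))$ is the minimum for $t \ge \phi(r)$.

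First I would record the pointwise bound
\[
  p^{(j)}(t,x,y) \le \frac{1}{V(x,r)}\left(\frac{t}{\phi(r)} \wedge 1\right),
\]
with equality for $t \le \phi(r)$; the range $t > \phi(r)$ uses $V(x,\phi^{-1}(t)) \ge V(x,r)$, i.e. monotonicity of $V$. Combined with the elementary comparison $1 - e^{-s} \simeq s \wedge 1$ (applied with $s = t/\phi(r)$), this identifies $\int_0^\infty (t/\phi(r) \wedge 1)\,\nu(dt) \simeq \overline{\phi}(\phi(r)^{-1})$ via (\ref{eq:Bernstein function}). Integrating the pointwise bound then yields at once the upper estimate $I \le C\,V(x,r)^{-1}\overline{\phi}(\phi(r)^{-1})$. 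For the lower estimate I would simply discard $t > \phi(r)$ and use the exact identity on $t \le \phi(r)$, obtaining
\[
  I \ge \int_0^{\phi(r)} \frac{t}{V(x,r)\phi(r)}\,\nu(dt) = \frac{1}{V(x,r)}\cdot\frac{1}{\phi(r)}\int_0^{\phi(r)}\frac{dt}{\psi(\phi^{-1}(t))} =: \frac{A}{V(x,r)}.
\]

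The remaining, and main, step is to show that $A$ already captures the whole Bernstein function, i.e. $\overline{\phi}(\phi(r)^{-1}) \simeq A$. Splitting $\int_0^\infty (t/\phi(r) \wedge 1)\,\nu(dt) = A + B$ with $B := \int_{\phi(r)}^\infty \nu(dt)$, it is enough to prove the tail bound $B \lesssim A$, since then $A \le A+B \le (1+C)A$. After the substitution $t = u\phi(r)$ both reduce to integrals of $g(u) := \psi(\phi^{-1}(u\phi(r)))^{-1}$, namely $A = \int_0^1 g(u)\,du$ and $B = \int_1^\infty g(u)\,u^{-1}\,du$, where $g$ is decreasing with $g(1) = \psi(r)^{-1}$. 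Writing $\psi \in LU(\alpha_1,\alpha_2)$ and $\phi \in LU(\beta_1,\beta_2)$, the lower scaling of $\psi$ and the upper scaling of $\phi$ give $g(u) \lesssim \psi(r)^{-1} u^{-\alpha_1/\beta_2}$ for $u \ge 1$, whence $B \lesssim \psi(r)^{-1}\int_1^\infty u^{-\alpha_1/\beta_2-1}\,du \simeq \psi(r)^{-1} = g(1) \le A$ (the last inequality from $g \ge g(1)$ on $(0,1)$). This yields $\overline{\phi}(\phi(r)^{-1}) \simeq A$, and combining it with the two one-sided estimates of the previous paragraph gives $J(x,y) \simeq I \simeq V(x,r)^{-1}\overline{\phi}(\phi_j(r)^{-1})$, as claimed. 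I expect the crux to be exactly this comparison $B \lesssim A$ — equivalently, that the tail of the L\'evy measure is dominated by its truncated first moment — which is where the scaling ($LU$) hypotheses on $\psi$ and $\phi$, and the standing integrability assumption ensuring $A < \infty$, enter in an essential way.
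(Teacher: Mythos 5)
Your proposal is correct, and its lower bound takes a genuinely different route from the paper's. The upper bound is the paper's argument in compressed form: the paper splits at $t=\phi(r)$ and applies $1-e^{-x}\ge xe^{-x}$ on the near-field piece and $1-e^{-x}\ge 1-e^{-1}$ on the tail, which is exactly your comparison $1-e^{-s}\simeq s\wedge 1$ unpacked. For the lower bound, however, the paper starts from $\overline{\phi}(\lambda)$, writes $1-e^{-\lambda t}=\lambda\int_0^t e^{-\lambda s}\,ds$, applies Fubini and the scaling of $\mu(s)=1/\psi(\phi^{-1}(s))$ to reach the exponentially weighted bound (\ref{eq:UpperEstimateOfBernsteinFunction}), splits at $\phi(r)$, and then matches \emph{both} resulting pieces against the corresponding near-field and far-field parts of $\int_0^\infty p^{(j)}\,d\nu$; matching the far-field part requires (VD) together with the damping estimate $\sup_{t>0}e^{-t}t^{d_2/\beta_1+1}<\infty$ to control $V(x,\phi^{-1}(t))/V(x,r)$. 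You instead prove the tail domination $B\lesssim A$ (the tail mass of the L\'evy measure is controlled by its truncated first moment, via the same scaling exponent — your $\alpha_1/\beta_2$ is the paper's $\gamma_1/\beta_2$), conclude $\overline{\phi}(\phi(r)^{-1})\simeq A$, and then need only the near-field lower bound of the heat kernel on $t\le\phi(r)$, discarding the tail of $J$ entirely. What your route buys: beyond monotonicity of $r\mapsto V(x,r)$ it uses no volume regularity at all — not only is (RVD) unnecessary, as the paper remarks after the proposition, but (VD) is never invoked either — so the argument is more elementary and self-contained. What the paper's route buys: the inequality (\ref{eq:UpperEstimateOfBernsteinFunction}), with the exponential weight kept intact, is precisely what gets recycled in the diffusion+jump lower bound (Lemma \ref{lem:diffusion+jumpTypeLowerBound}), where the argument of $\overline{\phi}$ is $\phi_j(r)^{-1}$ while the heat-kernel crossover occurs at the scale of $\phi=\phi_c\wedge\phi_j$; for $r<1$ these scales decouple, the tail genuinely contributes the extra term $1/\psi(r)$, and your reduction to the near field alone would not transfer verbatim.
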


\begin{proof}

    We first prove the upper bound of (\ref{eq:jumpKernelEstimate}).
    By Theorem \ref{thm:Okura} and the heat kernel estimate (\ref{eq:JumpHKE}),
    the jump kernel is bounded by
    \begin{equation}
      \begin{split}
      J(x,y) &= \frac{1}{2}\int_0^\infty p(t,x,y) \frac{dt}{t\psi(\phi^{-1}(t))}\\ 
      &\le C \int_{0}^{\phi(r)}\frac{t}{V(x,r)\phi(r)}\frac{dt}{t\psi(\phi^{-1}(t))} 
       + C \int_{\phi(r)}^\infty \frac{1}{V(x,\phi^{-1}(t))}\frac{dt}{t\psi(\phi^{-1}(t))}  \\
       &=: \mathrm{(I)} + \mathrm{(II)}.
      \end{split}
    \end{equation}
    We first estimate (I).
    Since $1 - e^{-x} \ge xe^{-x}$ for $x \ge 0$,
    we get
    \begin{align}
      \overline{\phi}(\lambda) \ge e^{-1}\lambda \int_{0}^{\lambda^{-1}}\frac{dt}{\psi(\phi^{-1}(t))}.
    \end{align}
    By taking $\lambda := \phi(r)^{-1}$, we obtain the following inequality
    \begin{align}
      \mathrm{(I)} &\le \frac{C}{V(x,r)}\overline{\phi}(\phi(r)^{-1}).
      \label{eq:jumpTypeJumpKernelUpperBound1}
    \end{align}
    We next estimate (II).
    By the non-decreasing property of $V$, we have   
    \begin{align}
      \mathrm{(II)} &\le \frac{C}{V(x,r)}\int_{\phi(r)}^{\infty}\frac{dt}{t\psi(\phi^{-1}(t))}.
    \end{align}
    Hence by taking $\lambda := \phi(r)^{-1}$,
    \begin{equation}
      \overline{\phi}(\phi(r)^{-1}) 
      \ge \int_{\phi(r)}^{\infty}(1 - e^{-\phi(r)^{-1}t})\frac{dt}{t\psi(\phi^{-1}(t))}
      \ge (1 - e^{-1})\int_{\phi(r)}^{\infty}\frac{dt}{t\psi(\phi^{-1}(t))},
    \end{equation}
    hence we obtain 
    \begin{align}
      \mathrm{(II)} &\le \frac{C}{V(x,r)}\overline{\phi}(\phi(r)^{-1}).
      \label{eq:jumpTypeJumpKernelUpperBound2}
    \end{align}
    By (\ref{eq:jumpTypeJumpKernelUpperBound1}) and (\ref{eq:jumpTypeJumpKernelUpperBound2}),
    we obtain the upper bound of the jump kernel
    \begin{align}
      J(x,y) &\le \frac{C}{V(x,r)}\overline{\phi}(\phi(r)^{-1}).
    \end{align}

    We next prove the lower bound of (\ref{eq:jumpKernelEstimate}).
    Put $\mu(t) := 1/\psi(\phi^{-1}(t))$.
    For all $\lambda > 0$, the Bernstein function $\overline{\phi}$ is estimated as follows,
    \begin{equation}
      \begin{split}
        \overline{\phi}(\lambda) &= \int_{0}^{\infty} (1 - e^{-\lambda t})\frac{1}{t}\mu(t)dt \\
      &= \int_{0}^{\infty} \lambda \int_{0}^{t} e^{-\lambda s}ds \frac{1}{t} \mu(t) dt\\
      &= \lambda \int_{0}^{\infty} \int_{s}^{\infty} e^{-\lambda s} \frac{1}{t} \frac{\mu(t)}{\mu(s)}\mu(s)dtds\\
      &\le C \lambda \int_{0}^{\infty} e^{-\lambda s}\mu(s) \int_{s}^{\infty} \left(\frac{s}{t}\right)^{\gamma_1/\beta_2} \frac{1}{t} dtds\\
      &= C\frac{\beta_2}{\gamma_1} \lambda \int_{0}^{\infty} e^{-\lambda s}\mu(s) ds\\
      &\le C \lambda \int_{0}^{\phi(r)} \mu(s) ds + C \lambda\int_{\phi(r)}^{\infty}e^{-\lambda s}\mu(s) ds
      =: \mathrm{(I)} + \mathrm{(II)}.
      \end{split}
      \label{eq:UpperEstimateOfBernsteinFunction}
    \end{equation}
    Hereinafter, we take $\lambda := \phi(r)^{-1}$.

    For (I), by definition, we get 
    \begin{equation}
      \frac{1}{V(x,r)}\mathrm{(I)}
      = C \int_{0}^{\phi(r)}\frac{t}{V(x,r)\phi(r)} \frac{1}{t\psi(\phi^{-1}(t))}dt
      \le C J(x,y).
    \end{equation}
    For (II) using (VD), we have 
    \begin{equation}
      \begin{split}
      \frac{1}{V(x,r)}\mathrm{(II)} 
      &= C \int_{\phi(r)}^{\infty}\frac{V(x,\phi^{-1}(t))}{V(x,r)} \frac{\lambda t e^{-\lambda t}}{V(x,\phi^{-1}(t))} \frac{1}{t}\mu(t)dt \\
      &\le C \int_{\phi(r)}^{\infty} e^{-\lambda t } \left(\lambda t \right)^{d_2/\beta_1 + 1} \frac{1}{V(x,\phi^{-1}(t))} \frac{1}{t}\mu(t)dt\\
      &\le C \int_{\phi(r)}^{\infty} \frac{1}{V(x,\phi^{-1}(t))} \frac{1}{t\psi(\phi^{-1}(t))} dt\\
      &\le C J(x,y),
      \label{eq:UpperEstimateOfBernsteinFunction2}
    \end{split}
  \end{equation}
    where the second inequality comes from 
    \begin{align*}
      \sup_{t>0}e^{-t}t^{d_2/\beta_1+1} &= \left(\frac{d_2}{\beta_1} + 1\right)^{d_2/\beta_1 + 1}e^{-(d_2/\beta_1 + 1)}.
    \end{align*}
    By (\ref{eq:UpperEstimateOfBernsteinFunction})-(\ref{eq:UpperEstimateOfBernsteinFunction2}),
    we obtain the desired lower bound
    \begin{align}
      \frac{C}{V(x,r)}\overline{\phi}(\phi(r)^{-1}) &\le J(x,y).
    \end{align}
\end{proof}

\begin{remark}
  \begin{enumerate}[(i)]
    \item  We do not use (RVD) to prove Proposition \ref{prop:jumpType}.
    \item We remark that estimates of jump kernels using Bernstein functions $\overline{\phi}$ 
      already exist, for example \cite[p.70]{CKW2021} \cite[Proposition 4.2]{KM2012}.
  \end{enumerate}
\end{remark}

\subsection{Diffusion+jump type}

In this subsection, 
we will prove the following proposition.

\begin{proposition}
  \label{prop:diffusion+jumpType}
  Assume $(\Di,\calF)$ satisfies the diffusion+jump type heat kernel estimate HK$_-(\phi_c,\phi_j)$ and 
  let $S$~ be an independent subordinator whose Bernstein function is given in (\ref{eq:Bernstein function})
  with $\phi := \phi_c \wedge \phi_j$.
  Then, the jump kernel $J(x,y)$ of the subordinate process satisfies 
  the following estimate: 
  \begin{equation}
    J(x,y) \simeq \frac{1}{V(x,r)}\left(\frac{1}{\psi(r)} +\overline{\phi}(\phi_j(r)^{-1})\right),
  \end{equation}
  where $r := d(x,y)$.
\end{proposition}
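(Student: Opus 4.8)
The plan is to start from the subordination formula of \cref{thm:Okura}, which gives
\[
J(x,y) = \frac{1}{2}\int_0^\infty p(t,x,y)\,\frac{dt}{t\psi(\phi^{-1}(t))},
\]
and to estimate this integral by inserting the two-sided bound \eqref{eq:diffusion+jumpHKE}. Throughout I set $r:=d(x,y)$ and $\mu(t):=1/\psi(\phi^{-1}(t))$ as in the proof of \cref{prop:jumpType}. The strategy is to treat the two summands of the target separately: the \emph{jump scale} $\overline{\phi}(\phi_j(r)^{-1})/V(x,r)$ will be produced exactly as in \cref{prop:jumpType}, so the genuinely new work concerns the \emph{diffusion scale} $1/(V(x,r)\psi(r))$ and the bookkeeping caused by the fact that here $\phi=\phi_c\wedge\phi_j$ is no longer equal to $\phi_j$.

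For the upper bound I first use the elementary inequality $a\wedge(b+c)\le (a\wedge b)+(a\wedge c)$ to split the right-hand side of \eqref{eq:diffusion+jumpHKE}, so that $J(x,y)\le C(J_c+J_j)$, where $J_j$ carries $\frac{1}{V(x,\phi^{-1}(t))}\wedge p^{(j)}(t,x,y)$ and $J_c$ carries $\frac{1}{V(x,\phi^{-1}(t))}\wedge p^{(c)}(c_4t,x,y)$. For $J_j$ I bound the minimum by $p^{(j)}$ and split the $t$-integral at the crossover $t=\phi_j(r)$ of \eqref{eq:JumpHK}; using $1-e^{-x}\ge xe^{-1}$ on $[0,1]$ and $1-e^{-x}\ge 1-e^{-1}$ on $[1,\infty)$ together with $1/V(x,\phi_j^{-1}(t))\le 1/V(x,r)$ for $t\ge\phi_j(r)$, the two pieces are bounded by $C\,\overline{\phi}(\phi_j(r)^{-1})/V(x,r)$, which is the computation of \cref{prop:jumpType} evaluated at $\lambda=\phi_j(r)^{-1}$. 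For $J_c$ I split at $t=\phi_c(r)$. On $\{t\ge\phi_c(r)\}$ I bound the minimum by $1/V(x,\phi^{-1}(t))\le 1/V(x,r)$ (since $\phi\le\phi_c$ gives $\phi^{-1}(t)\ge r$) and control the tail $\int_{\phi_c(r)}^\infty \mu(t)\,dt/t$ by $C/\psi(\phi^{-1}(\phi_c(r)))\le C/\psi(r)$ through a dyadic decomposition and the lower scaling of $\psi\circ\phi^{-1}$. On $\{t<\phi_c(r)\}$ I bound the minimum by $p^{(c)}(c_4t,x,y)$ and invoke the lower bound $\sup_{s>0}\{r/s-c_4t/\phi_c(s)\}\ge c(\phi_c(r)/t)^{1/(\beta_{2,c}-1)}$ (valid because $\beta_{2,c}>1$); after the substitution $u=\phi_c(r)/t$ and absorbing the polynomial factors produced by (VD) and by the scalings of $\psi$ and $\phi_c^{-1}$, the exponential renders the integral convergent and yields $J_c\le C/(V(x,r)\psi(r))$.

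For the lower bound I insert the two-term lower estimate in \eqref{eq:diffusion+jumpHKE}, obtaining $J(x,y)\ge \frac{c_1}{2}(A+B)$ with $A=\int_{\phi(c_2r)}^\infty \frac{1}{V(x,\phi^{-1}(t))}\frac{\mu(t)}{t}\,dt$ and $B=\frac{1}{V(x,r)\phi_j(r)}\int_0^{\phi(c_2r)}\mu(t)\,dt$. Restricting $A$ to $[\phi(c_2r),\phi(2c_2r)]$, on which $\phi^{-1}(t)\simeq r$ and hence $V(x,\phi^{-1}(t))\simeq V(x,r)$ and $\psi(\phi^{-1}(t))\simeq\psi(r)$, gives $A\ge C/(V(x,r)\psi(r))$. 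To recover the jump scale I use $1-e^{-x}\le x\wedge 1$ to get $\overline{\phi}(\phi_j(r)^{-1})\le P_1+P_2$, where $P_1=\phi_j(r)^{-1}\int_0^{\phi_j(r)}\mu(t)\,dt$ and $P_2=\int_{\phi_j(r)}^\infty \mu(t)\,dt/t$. The key observation is that $P_2\le C/\psi(r)$: since $\phi\le\phi_j$ we have $\phi^{-1}(\phi_j(r))\ge r$, and the same dyadic estimate gives $P_2\le C/\psi(\phi^{-1}(\phi_j(r)))\le C/\psi(r)$, so $P_2/V(x,r)$ is absorbed by $A$. For $P_1$ I split its integral at $\phi(c_2r)$: the part over $[0,\phi(c_2r)]$ is exactly $B$, while on $[\phi(c_2r),\phi_j(r)]$ the monotonicity of $\mu$ gives $\mu(t)\le\mu(\phi(c_2r))=1/\psi(c_2r)\simeq 1/\psi(r)$, so this remainder is at most $C/(V(x,r)\psi(r))$ and is again absorbed by $A$ (the degenerate case $\phi(c_2r)\ge\phi_j(r)$ gives $P_1/V(x,r)\le B$ directly). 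Combining, $\frac{c_1}{2}(A+B)$ dominates a constant multiple of both $1/(V(x,r)\psi(r))$ and $\overline{\phi}(\phi_j(r)^{-1})/V(x,r)$, hence of their sum.

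I expect the main obstacle to be the diffusion contribution $J_c$ in the upper bound: unlike the pure-jump case one must tame the non-integrable singularity of $\mu(t)/t$ as $t\downarrow 0$ purely through the sub-Gaussian exponential, and one must do so while the subordinator scale $\phi$, the diffusion scale $\phi_c$ and the jump scale $\phi_j$ are genuinely distinct, so that the comparisons $\phi^{-1}(t)\simeq r$ used freely in \cref{prop:jumpType} are replaced by one-sided scaling inequalities. A secondary subtlety, already visible in the lower bound, is the mismatch between the cutoff $\phi(c_2r)$ and the denominator $\phi_j(r)$ appearing in \eqref{eq:diffusion+jumpHKE}; this is exactly what forces the bound $P_2\le C/\psi(r)$ and the splitting of $P_1$, and reflects the fact, recorded in the remark following \cref{thm:diffusion+jump}, that $\overline{\phi}(\phi_j(r)^{-1})$ and $1/\psi(r)$ are not comparable in general.
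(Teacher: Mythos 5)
Your proposal is correct, but it follows a genuinely different route from the paper's. The paper (Lemmas \ref{lem:diffusion+jumpTypeUpperBound} and \ref{lem:diffusion+jumpTypeLowerBound}) splits into the cases $r\ge 1$ and $r<1$, exploiting the structural assumption \eqref{eq:relationOfScaleFunction} (so that $\phi=\phi_j$ for $r\ge1$ and $\phi=\phi_c$ for $r<1$); it imports the diffusion contribution bound $\int_0^\infty p^{(c)}(c t,x,y)\,dt/(t\psi(\phi_c^{-1}(t)))\le C/(V(x,r)\psi(r))$ wholesale from \cref{prop:diffusionType} (i.e.\ \cite[Lemma 4.2]{BKKL}), uses the refined upper heat kernel bound of \cite[Remark 1.12]{CKW2020} in the small-$r$ case, and in the lower bound introduces the truncated Bernstein function \eqref{eq:truncatedBernsteinFunction} together with the claim \eqref{eq:ClaimCompareToTruncatedBernsteinFunction}. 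You instead avoid any case split on $r$: you use only the one-sided inequalities $\phi\le\phi_c$ and $\phi\le\phi_j$, work directly with the display \eqref{eq:diffusion+jumpHKE}, reprove the diffusion contribution from scratch via the sub-Gaussian estimate $\sup_{s>0}\{r/s-ct/\phi_c(s)\}\ge c'(\phi_c(r)/t)^{1/(\beta_{2,c}-1)}$ (valid since $\beta_{2,c}>1$), and replace the paper's $\phi^W$-based claim by the decomposition $\overline{\phi}(\phi_j(r)^{-1})\le P_1+P_2$ with absorption of the remainders into $A$, which is the same idea in different clothing. What your route buys is self-containedness and uniformity in $r$ (you never invoke \cref{prop:diffusionType}, \cite[Remark 1.12]{CKW2020}, or the relation \eqref{eq:relationOfScaleFunction}); what the paper's route buys is brevity, since the two external results dispose of exactly the two integrals you must estimate by hand.

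Two small points to tidy up. First, in the lower bound you restrict $A$ to $[\phi(c_2r),\phi(2c_2r)]$; the lower scaling of $\phi$ only gives $\phi(2c_2r)/\phi(c_2r)\ge C^{-1}2^{\beta_1}$ with $C\ge1$, which need not exceed $1$, so the logarithmic measure of your interval is not obviously bounded below. Use $[\phi(c_2r),2\phi(c_2r)]$ (as the paper does in \eqref{eq:diffusion+jumpTypeJumpKernelLowerBound2InCase1}, giving measure $\log 2$ exactly) or $[\phi(c_2r),\phi(Kc_2r)]$ with $K$ chosen so that $C^{-1}K^{\beta_1}\ge 2$. Second, when you transplant the computation of \cref{prop:jumpType} to bound $J_j$, note that there the crossover, the kernel scale and the density $\mu$ all involved the same function $\phi=\phi_j$, whereas here the crossover and kernel use $\phi_j$ while $\mu(t)=1/\psi(\phi^{-1}(t))$ uses $\phi\le\phi_j$; your argument does go through because the inequalities $1-e^{-x}\ge xe^{-1}$ on $[0,1]$, $1-e^{-x}\ge1-e^{-1}$ on $[1,\infty)$ and $V(x,\phi_j^{-1}(t))\ge V(x,r)$ for $t\ge\phi_j(r)$ never compare $\phi^{-1}$ with $\phi_j^{-1}$, but this should be said explicitly rather than by citation of \cref{prop:jumpType}.
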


We note that the diffusion version of Proposition \ref{prop:diffusion+jumpType} 
is known.
\begin{proposition}(\cite[Lemma 4.2]{BKKL})
  \label{prop:diffusionType}
  Assume $(\Di,\calF)$ satisfies the diffusion type heat kernel estimate for a scale function $\phi_c$ and 
  let $S$~ be an independent subordinator whose Bernstein function is given in (\ref{eq:Bernstein function})
  with $\phi := \phi_c$.
  Then, the jump kernel $J(x,y)$ of the subordinate process satisfies 
  the following estimate: 
  \begin{equation}
    J(x,y) \simeq \frac{1}{V(x,r)} \frac{1}{\psi(r)},
  \end{equation}
  where $r := d(x,y)$.
\end{proposition}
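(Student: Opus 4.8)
The plan is to mirror the argument for Proposition \ref{prop:jumpType}, but with the stable-like kernel replaced by the sub-Gaussian kernel $p^{(c)}$ of (\ref{eq:diffusionHKE}). The crucial difference is that the off-diagonal decay is now \emph{exponential} rather than polynomial, so the large-scale contribution is suppressed and only the term $1/\psi(r)$ survives. Throughout I set $\phi = \phi_c$ (satisfying $LU(\beta_1,\beta_2)$ with $\beta_1 > 1$), let $\psi$ satisfy $LU(\alpha_1,\alpha_2)$, write $r = d(x,y)$, and start from the \^Okura formula of Theorem \ref{thm:Okura},
\[
  J(x,y) = \frac{1}{2}\int_0^\infty p(t,x,y)\frac{dt}{t\psi(\phi^{-1}(t))},
\]
combined with the diffusion type estimate $c_1 p^{(c)}(c_2 t,x,y) \le p(t,x,y) \le c_3 p^{(c)}(c_4 t,x,y)$.

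For the upper bound I would split the integral at $t = \phi(r)$. On the far range $t \ge \phi(r)$ one has $\phi^{-1}(t) \ge r$, so the exponential factor in (\ref{eq:diffusionHKE}) is $\le 1$ and, by monotonicity of $V$, $p^{(c)}(c_4 t,x,y) \le 1/V(x,r)$; it then remains to bound $\int_{\phi(r)}^\infty \frac{dt}{t\psi(\phi^{-1}(t))} \le C/\psi(r)$, which follows from the substitution $u = t/\phi(r)$ together with the $LU$ bounds $\psi(\phi^{-1}(t)) \ge C^{-1}\psi(r)\,u^{\alpha_1/\beta_2}$, reducing the integral to the convergent $\int_1^\infty u^{-\alpha_1/\beta_2 - 1}\,du$. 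On the near range $t < \phi(r)$ the exponential decay does the work: the standard lower bound for the sub-Gaussian exponent yields $\sup_{s>0}\{r/s - c_4 t/\phi(s)\} \ge c(\phi(r)/t)^{1/(\beta_2 - 1)}$, while (VD) and $LU$ give $1/V(x,\phi^{-1}(t)) \le C V(x,r)^{-1}(\phi(r)/t)^{d_2/\beta_1}$ and $1/\psi(\phi^{-1}(t)) \le C\psi(r)^{-1}(\phi(r)/t)^{\alpha_2/\beta_1}$. Substituting $u = \phi(r)/t$ turns this piece into $\frac{C}{V(x,r)\psi(r)}\int_1^\infty u^{a}e^{-cu^{1/(\beta_2 - 1)}}\frac{du}{u}$ with $a = (d_2 + \alpha_2)/\beta_1$, a finite constant. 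Adding the two ranges gives $J(x,y) \le C/(V(x,r)\psi(r))$.

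For the lower bound it suffices to retain a single time window. On $[\phi(r),2\phi(r)]$ doubling of $\phi$ gives $\phi^{-1}(t) \simeq r$, hence $V(x,\phi^{-1}(t)) \simeq V(x,r)$ and $\psi(\phi^{-1}(t)) \simeq \psi(r)$; moreover $c_2 t \ge c_2\phi(r)$ places the lower sub-Gaussian kernel in its on-diagonal regime, so $p^{(c)}(c_2 t,x,y) \ge c\,V(x,r)^{-1}$ there (if $c_2 < 1$ I would instead integrate over $[\phi(r)/c_2,\,2\phi(r)/c_2]$ to guarantee $c_2 t \ge \phi(r)$). Since $\int_{\phi(r)}^{2\phi(r)} dt/t = \log 2$, this produces $J(x,y) \ge c(V(x,r)\psi(r))^{-1}\log 2$, completing the two-sided estimate.

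The main obstacle is the near-range piece (I): one must verify the lower bound $\sup_{s>0}\{r/s - t/\phi(s)\} \ge c(\phi(r)/t)^{1/(\beta_2 - 1)}$ for $t \le \phi(r)$, ensuring that the exponential factor dominates the polynomial growth of $1/V$ and $1/\psi$. This is precisely where the diffusion hypothesis $\beta_1 > 1$ enters and where the proof departs from Proposition \ref{prop:jumpType}: the absence of a polynomial off-diagonal tail is exactly what removes the $\overline{\phi}(\phi_j(r)^{-1})$ contribution seen in the jump case. The remaining work — the two $u$-substitutions and the convergence of the resulting one-dimensional integrals — is routine.
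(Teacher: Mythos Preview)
The paper does not give its own proof of this proposition; it is quoted verbatim from \cite[Lemma~4.2]{BKKL} and then used as a black box (e.g.\ in the proof of Lemma~\ref{lem:diffusion+jumpTypeUpperBound}). Your argument is correct and is precisely the natural adaptation of the proof of Proposition~\ref{prop:jumpType} to the sub-Gaussian setting: the far-range piece $t\ge\phi(r)$ is handled exactly as the term (II) there, while on the near range $t<\phi(r)$ the polynomial off-diagonal bound $t/(V(x,r)\phi(r))$ is replaced by the exponential decay of $p^{(c)}$, which kills the $\overline{\phi}$ contribution and leaves only $1/\psi(r)$. Your single-window lower bound is essentially the computation (\ref{eq:diffusion+jumpTypeJumpKernelLowerBound2InCase1}) that the paper carries out later for the diffusion+jump case.

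One small point to tighten: in the far-range upper bound you claim $p^{(c)}(c_4t,x,y)\le 1/V(x,r)$ by monotonicity of $V$, but this requires $\phi^{-1}(c_4t)\ge r$, i.e.\ $t\ge\phi(r)/c_4$. If $c_4<1$ this fails for $t$ just above $\phi(r)$; you should either split at $\phi(r)/c_4$ or absorb the gap via (VD) and the doubling of $\phi$. Similarly, the near-range exponent bound $\sup_{s>0}\{r/s-c_4t/\phi(s)\}\ge c(\phi(r)/t)^{1/(\beta_2-1)}$ is the standard sub-Gaussian estimate (it uses $\beta_1>1$, as you note), but strictly speaking the general $LU(\beta_1,\beta_2)$ version yields some positive power of $\phi(r)/t$ rather than exactly $1/(\beta_2-1)$; any positive exponent suffices for your convergent integral, so the conclusion stands.
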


We divide the proof of Proposition \ref{prop:diffusion+jumpType} into two lemmas.
\begin{lemma}[Upper bound]
  \label{lem:diffusion+jumpTypeUpperBound}
  The following estimate holds under the assumption in Proposition \ref{prop:diffusion+jumpType},
  \begin{equation}
    J(x,y) \le \frac{C}{V(x,r)}\left(\frac{1}{\psi(r)} +\overline{\phi}(\phi_j(r)^{-1})\right), \qquad r > 0.
  \end{equation}
\end{lemma}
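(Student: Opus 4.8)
The plan is to start from the explicit formula for the subordinate jump kernel given in Theorem \ref{thm:Okura}, namely $J(x,y) = \tfrac{1}{2}\int_0^\infty p(t,x,y)\,\tfrac{dt}{t\psi(\phi^{-1}(t))}$, and substitute the upper heat kernel bound from $\mathrm{HK}_-(\phi_c,\phi_j)$ in (\ref{eq:diffusion+jumpHKE}). Applying the elementary inequality $a\wedge(b+c)\le(a\wedge b)+(a\wedge c)$, valid for $a,b,c\ge 0$, to the factor $\tfrac{1}{V(x,\phi^{-1}(t))}\wedge\big(p^{(c)}(c_4t,x,y)+p^{(j)}(t,x,y)\big)$, I would split $J(x,y)\le C(J_c+J_j)$, where $J_c$ carries the diffusion kernel $p^{(c)}$ and $J_j$ the jump kernel $p^{(j)}$, each integrated against the weight $\tfrac{dt}{t\psi(\phi^{-1}(t))}$. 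The aim is for $J_c$ to produce the summand $\tfrac{1}{\psi(r)}$ and for $J_j$ to produce $\overline{\phi}(\phi_j(r)^{-1})$.

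For the diffusion part, the key observation is $\phi=\phi_c\wedge\phi_j\le\phi_c$, hence $\phi^{-1}(t)\ge\phi_c^{-1}(t)$, so by monotonicity of $\psi$ one has $\psi(\phi^{-1}(t))\ge\psi(\phi_c^{-1}(t))$. Dropping the cap (since $\,\cdot\wedge p^{(c)}\le p^{(c)}$) together with this comparison gives
\begin{equation*}
  J_c\le\int_0^\infty p^{(c)}(c_4t,x,y)\,\frac{dt}{t\psi(\phi_c^{-1}(t))},
\end{equation*}
which is precisely the integral estimated in the diffusion case. I would then invoke Proposition \ref{prop:diffusionType} (the upper bound of \cite[Lemma 4.2]{BKKL}) to conclude $J_c\le \tfrac{C}{V(x,r)\psi(r)}$. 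The only care needed here is to read the cited result as an analytic estimate for $p^{(c)}$ against the weight $\tfrac{dt}{t\psi(\phi_c^{-1}(t))}$, rather than as a statement about a genuine heat kernel.

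For the jump part I would discard the cap and insert the explicit form (\ref{eq:JumpHK}) of $p^{(j)}$, splitting at $t=\phi_j(r)$: on $(0,\phi_j(r))$ the minimum equals $\tfrac{t}{V(x,r)\phi_j(r)}$, and on $(\phi_j(r),\infty)$ it equals $\tfrac{1}{V(x,\phi_j^{-1}(t))}$. On the second interval $\phi_j^{-1}(t)\ge r$ forces $V(x,\phi_j^{-1}(t))\ge V(x,r)$, so $\tfrac{1}{V(x,\phi_j^{-1}(t))}\le\tfrac{1}{V(x,r)}$; factoring out $\tfrac{1}{V(x,r)}$ leaves
\begin{equation*}
  \int_0^{\phi_j(r)}\frac{1}{\phi_j(r)}\frac{dt}{\psi(\phi^{-1}(t))}+\int_{\phi_j(r)}^\infty\frac{dt}{t\psi(\phi^{-1}(t))}.
\end{equation*}
Applying the two-sided bound $1-e^{-x}\simeq 1\wedge x$ to $\overline{\phi}(\phi_j(r)^{-1})=\int_0^\infty(1-e^{-t/\phi_j(r)})\tfrac{dt}{t\psi(\phi^{-1}(t))}$ shows this sum is comparable to $\overline{\phi}(\phi_j(r)^{-1})$, giving $J_j\le\tfrac{C}{V(x,r)}\overline{\phi}(\phi_j(r)^{-1})$. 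Combining the two bounds yields the lemma.

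The individual estimates are elementary; the point demanding the most attention is the bookkeeping among the three scales $\phi_c,\phi_j,\phi$. The genuinely new feature is that the Bernstein weight $\tfrac{dt}{t\psi(\phi^{-1}(t))}$ uses $\phi=\phi_c\wedge\phi_j$ throughout, whereas $p^{(j)}$ and the evaluation point $\phi_j(r)^{-1}$ use $\phi_j$. One must therefore keep $\phi$ fixed in the weight in both computations, so that the jump integral reproduces $\overline{\phi}(\phi_j(r)^{-1})$ (with $\phi$ in the weight and $\phi_j$ in the argument) rather than an integral with $\phi_j$ everywhere.
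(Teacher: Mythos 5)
Your proof is correct, and it is genuinely more streamlined than the paper's. The paper proves the lemma by a case analysis on $r$: for $r \ge 1$ it argues essentially as you do (diffusion part via the weight comparison $\psi(\phi^{-1}(t)) \ge \psi(\phi_c^{-1}(t))$ and Proposition \ref{prop:diffusionType}, read exactly as you read it, as an analytic estimate for the integral of $p^{(c)}$ against $dt/(t\psi(\phi_c^{-1}(t)))$; jump part reduced, using $\phi(r)=\phi_j(r)$ for $r\ge 1$, to the single-scale computation in the proof of Proposition \ref{prop:jumpType}); but for $r<1$ it abandons this route and instead invokes the refined upper bound of \cite[Remark 1.12]{CKW2020}, splits the integral at $\phi_c(c_1 r)$ into three terms, introduces the truncated Bernstein function $\phi^W(\lambda;r)$ of (\ref{eq:truncatedBernsteinFunction}), and bounds the tail $\int_{\phi_c(c_1r)}^{\infty} V(x,\phi^{-1}(t))^{-1}\,dt/(t\psi(\phi^{-1}(t)))$ by $C/(V(x,r)\psi(r))$ using (VD) and the scaling of $\psi\circ\phi^{-1}$. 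The reason the paper switches methods is precisely the scale mismatch you flag at the end: for $r<1$ one has $\phi(r)=\phi_c(r)\neq\phi_j(r)$, so the single-scale computation of Proposition \ref{prop:jumpType} no longer applies verbatim to the jump term, whose kernel carries $\phi_j$ while the weight carries $\phi$. Your observation that $1-e^{-x}\simeq 1\wedge x$, so that splitting the jump integral at $t=\phi_j(r)$ reproduces exactly $\int_0^\infty\bigl(1\wedge(t/\phi_j(r))\bigr)\,dt/(t\psi(\phi^{-1}(t)))\simeq\overline{\phi}(\phi_j(r)^{-1})$ with $\phi$ kept in the weight, resolves this mismatch uniformly in $r$ and eliminates the case split, the refined heat-kernel bound, and the truncated Bernstein function altogether. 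What the paper's longer route buys is the slightly stronger information that for $r<1$ the tail contribution is dominated by $1/(V(x,r)\psi(r))$ alone, a structure that is reused in the lower-bound lemma (the claim (\ref{eq:ClaimCompareToTruncatedBernsteinFunction})); for the statement of this lemma itself, your argument suffices and rests on the same two pillars as the paper's.
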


\begin{proof}
  Since the scale function $\phi(r)$ and the heat kernel 
  behave differently between small distance and large distance,
  we separate two cases.
  \begin{description}
  \item[Case 1 ($r \ge 1$).] 

  Since the heat kernel $p(t,x,y)$ is bounded by
  \begin{align*}
    p(t,x,y) &\le c_3 \left(p^{(c)}(c_4t,x,y) + \frac{1}{V(x,\phi^{-1}(t))}\wedge p^{(j)}(t,x,y)\right),
  \end{align*}
  we have a bound
  \begin{equation}
    \begin{split}
    J(x,y) &\le c_3 \int_0^\infty p^{(c)}(c_4t,x,y) \frac{dt}{t\psi(\phi^{-1}(t))} + c_3 \int_0^\infty \left( \frac{1}{V(x,\phi^{-1}(t))}\wedge p^{(j)}(t,x,y)\right)\frac{dt}{t\psi(\phi^{-1}(t))} \\
    &= \mathrm{(I)} + \mathrm{(II)}.
    \end{split}
    \label{eq:JumpKernelBoundInLemma1}
  \end{equation}
  For (I), we have
  \begin{align}
    \mathrm{(I)} 
    &\le c_3 \int_0^\infty p^{(c)}(c_4t,x,y) \frac{dt}{t\psi(\phi_c^{-1}(t))}
    \le \frac{C}{V(x,r)\psi(r)},
  \end{align}
  where the last inequality follows from Proposition \ref{prop:diffusionType}.
  (II) is bounded by
  \begin{equation}
    \begin{split}
      \mathrm{(II)} 
      &= c_3 \int_0^\infty \left(\frac{1}{V(x,\phi^{-1}(t))}\wedge\frac{t}{V(x,r)\phi(r)}\right)\frac{dt}{t\psi(\phi^{-1}(t))}\\
      &\le C\frac{\overline{\phi}(\phi(r)^{-1})}{V(x,r)} = C\frac{\overline{\phi}(\phi_j(r)^{-1})}{V(x,r)},
    \end{split}
  \end{equation}
  where the inequality follows from jump type estimate in Proposition \ref{prop:jumpType}.

  \item[Case 2 ($r < 1$).] 
By \cite[Remark 1.12]{CKW2020}, the heat kernel $p(t,x,y)$ is bounded by
\begin{equation}
  p(t,x,y) \le \begin{cases}
    C\left(\cfrac{t}{V(x,r)\phi_j(r)} + p^{(c)}(c_2t,x,y)\right), & 0 < t \le \phi_c(c_1 r)\\
    \cfrac{C}{V(x,\phi^{-1}(t))}, & \phi_c(c_1 r) < t
  \end{cases}.
\end{equation}
Hence, the jump kernel is bounded by
\begin{align}
  J(x,y) 
  &\le C\int_0^{\phi_c(c_1 r)} \frac{t}{V(x,r)\phi_j(r)}\frac{dt}{t\psi(\phi^{-1}(t))}
  + C\int_{0}^{\phi_c(c_1r)}p^{(c)}(c_2t,x,y)\frac{dt}{t\psi(\phi^{-1}(t))}\nonumber\\
  &\quad + C\int_{\phi_c(c_1 r)}^{\infty} \frac{1}{V(x,\phi^{-1}(t))}\frac{dt}{t\psi(\phi^{-1}(t))}\nonumber\\
  &= \mathrm{(I)} + \mathrm{(II)} + \mathrm{(III)}.
  \label{eq:diffusion+jumpJumpkernelUpperBoundAtProof}
\end{align}
Define a truncated Bernstein function $\phi^W(\lambda;r)$ by
\begin{align}
  \label{eq:truncatedBernsteinFunction}
  \phi^W(\lambda;r) &:= \int_{0}^{\phi(r)}(1-e^{-\lambda t})\frac{dt}{t\psi(\phi^{-1}(t))}.
\end{align}
Since $1 - e^{-x} \ge x e^{-x}, x \ge 0$, by taking $\lambda := \phi_j(r)^{-1}$ we have 
\begin{align}
  \phi^W(\lambda;r) 
  &\ge \int_0^{\phi_c(r)}\lambda t e^{-\lambda t}\frac{dt}{t\psi(\phi^{-1}(t))}
  \ge e^{-1}\lambda\int_0^{\phi_c(r)} \frac{dt}{\psi(\phi^{-1}(t))}, \qquad \lambda > 0,
\end{align}
here we use $\phi_c(r) \le \phi_j(r)$ for $r \le 1$.
Hence, the first term of (\ref{eq:diffusion+jumpJumpkernelUpperBoundAtProof}) is bounded by
\begin{equation}
  \mathrm{(I)} 
  \le \frac{C}{V(x,r)\phi_j(r)}\int_0^{\phi_c(c_1 r)} \frac{dt}{\psi(\phi^{-1}(t))}
  \le \frac{C}{V(x,r)}\phi^W(\phi_j(r)^{-1};r)
  \le \frac{C}{V(x,r)} \overline{\phi}(\phi_j(r)^{-1}).
  \label{eq:diffusion+jumpTypeJumpKernelUpperBound1}
\end{equation}
The second term of (\ref{eq:diffusion+jumpJumpkernelUpperBoundAtProof}) is same as Case 1 $(r \ge 1)$ and we have 
\begin{equation}
  \mathrm{(II)} \le C \int_{0}^{\infty} p^{(c)}(c_2t,x,y)\frac{dt}{t\psi(\phi_c^{-1}(t))}  
  \le \frac{C}{V(x,r)\psi(r)}.
  \label{eq:diffusion+jumpTypeJumpKernelUpperBound2}
\end{equation}
Finally, putting $c := c_1 \wedge 1$, the third term of (\ref{eq:diffusion+jumpJumpkernelUpperBoundAtProof}) is bounded by
\begin{equation}
  \begin{split}
  \mathrm{(III)} 
  &\le \frac{C}{V(x,r)\psi(r)}\frac{V(x,r)}{V(x,cr)}\int_{\phi(c r)}^\infty \frac{\psi(r)}{\psi(\phi^{-1}(t))}\frac{V(x,cr)}{V(x,\phi^{-1}(t))}\frac{1}{t}dt \\
  &\le \frac{C}{V(x,r)\psi(r)}\int_{\phi(r)}^\infty \frac{\psi(r)}{\psi(\phi^{-1}(t))}\frac{1}{t}dt\\
  &\le \frac{C}{V(x,r)\psi(r)}\int_{\phi(r)}^\infty \left(\frac{\phi(r)}{t}\right)^{\gamma_1/\beta_2}\frac{1}{t}dt \\
  &\le \frac{C}{V(x,r)\psi(r)}.
\end{split}
  \label{eq:diffusion+jumpTypeJumpKernelUpperBound3}
\end{equation}
where second inequality comes from non-decreasing property of $V(x,r)$ 
and (VD).
Summing up the 
(\ref{eq:diffusion+jumpTypeJumpKernelUpperBound1}), 
(\ref{eq:diffusion+jumpTypeJumpKernelUpperBound2}) and
(\ref{eq:diffusion+jumpTypeJumpKernelUpperBound3}), we obtain desired result.
\end{description}
\end{proof}

\begin{lemma}[Lower bound]
  \label{lem:diffusion+jumpTypeLowerBound}
  The following estimate holds under the assumption in Proposition \ref{prop:diffusion+jumpType},
  \begin{equation}
    \frac{C}{V(x,r)}\left(\frac{1}{\psi(r)} +\overline{\phi}(\phi_j(r)^{-1})\right) \le J(x,y), \qquad r > 0.
  \end{equation}
\end{lemma}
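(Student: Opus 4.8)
The plan is to feed the lower heat–kernel estimate of $(\ref{eq:diffusion+jumpHKE})$ into \^Okura's formula $J(x,y)=\tfrac12\int_0^\infty p(t,x,y)\,\frac{dt}{t\psi(\phi^{-1}(t))}$ from Theorem \ref{thm:Okura}. Writing $r=d(x,y)$, this gives $J(x,y)\ge \mathrm{(A)}+\mathrm{(B)}$ with
\begin{equation*}
  \mathrm{(A)}=\frac{c_1}{2}\int_{\phi(c_2r)}^\infty \frac{1}{V(x,\phi^{-1}(t))}\frac{dt}{t\psi(\phi^{-1}(t))},\quad
  \mathrm{(B)}=\frac{c_1}{2V(x,r)\phi_j(r)}\int_{0}^{\phi(c_2r)}\frac{dt}{\psi(\phi^{-1}(t))}.
\end{equation*}
I will then prove the two one-sided bounds $J(x,y)\ge \tfrac{C}{V(x,r)\psi(r)}$ and $J(x,y)\ge \tfrac{C}{V(x,r)}\overline{\phi}(\phi_j(r)^{-1})$ separately and average them. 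With the convention $c_2\le 1$ (which may be assumed after shrinking $c_2$, so that $\phi(c_2r)\le\phi_j(r)$) no distinction between $r<1$ and $r\ge1$ is needed.

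For the first bound I keep only the window $t\in[\phi(c_2r),\phi(Kc_2r)]$ of $\mathrm{(A)}$, where $K$ is fixed so large that $\phi(Kc_2r)/\phi(c_2r)\ge 2$ (possible by the lower $LU$ bound for $\phi$). On this window $\phi^{-1}(t)\in[c_2r,Kc_2r]$, so by (VD) and Definition \ref{def:scaleFunction} one has $V(x,\phi^{-1}(t))\le C V(x,r)$ and $\psi(\phi^{-1}(t))\le C\psi(r)$, while $\int \tfrac{dt}{t}=\log\tfrac{\phi(Kc_2r)}{\phi(c_2r)}\ge\log2$. Hence $\mathrm{(A)}\ge \tfrac{C}{V(x,r)\psi(r)}$ for all $r>0$.

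For the second bound I estimate $\overline{\phi}(\phi_j(r)^{-1})$ from above. Splitting the defining integral $(\ref{eq:Bernstein function})$ at $\phi_j(r)$ and using $1-e^{-s}\le s\wedge1$ gives $\overline{\phi}(\phi_j(r)^{-1})\le R_{1a}+R_{1b}+R_2$, where
\begin{equation*}
  R_{1a}=\frac{1}{\phi_j(r)}\int_0^{\phi(c_2r)}\frac{dt}{\psi(\phi^{-1}(t))},\
  R_{1b}=\frac{1}{\phi_j(r)}\int_{\phi(c_2r)}^{\phi_j(r)}\frac{dt}{\psi(\phi^{-1}(t))},\
  R_2=\int_{\phi_j(r)}^\infty\frac{dt}{t\psi(\phi^{-1}(t))}.
\end{equation*}
Now $R_{1a}=\tfrac{2V(x,r)}{c_1}\mathrm{(B)}$, so $\tfrac1{V(x,r)}R_{1a}\le C J(x,y)$. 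The place where the argument departs from the pure-jump case (Proposition \ref{prop:jumpType}) is that, since $\phi=\phi_c\wedge\phi_j$ is strictly smaller than $\phi_j$ when $r<1$, the jump-type piece $\mathrm{(B)}$ reaches only times $t\le\phi(c_2r)\ll\phi_j(r)$ and cannot absorb the intermediate part $R_{1b}$ nor the tail $R_2$. The resolving observation is that both are instead controlled by $1/\psi(r)$: since $\psi(\phi^{-1}(t))\ge\psi(c_2r)\ge c\,\psi(r)$ on $[\phi(c_2r),\phi_j(r)]$ one gets $R_{1b}\le C/\psi(r)$, while the lower $LU$ bound for $\psi\circ\phi^{-1}$ (exponent $\gamma_1/\beta_2$) renders the tail convergent with $R_2\le C/\psi(\phi^{-1}(\phi_j(r)))\le C/\psi(r)$, using $\phi^{-1}(\phi_j(r))\ge r$. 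By the first bound each is $\le C J(x,y)$, hence $\tfrac1{V(x,r)}\overline{\phi}(\phi_j(r)^{-1})\le C J(x,y)$.

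Averaging the two bounds yields $J(x,y)\ge \tfrac{C}{V(x,r)}\bigl(\tfrac1{\psi(r)}+\overline{\phi}(\phi_j(r)^{-1})\bigr)$, as claimed. I expect the main difficulty to be precisely this bookkeeping: recognizing that $\overline{\phi}(\phi_j(r)^{-1})$ must be split at its intrinsic scale $\phi_j(r)$ rather than at the subordination scale $\phi(c_2r)$, and that the gap between the two scales is exactly what is swept into the $1/\psi(r)$ term produced by the near-diagonal part $\mathrm{(A)}$. Secondary care is needed with the constant $c_2$ and with checking that the $LU$ exponent of $\psi\circ\phi^{-1}$ indeed guarantees integrability of the tail $R_2$; neither (RVD) nor the chain condition enters.
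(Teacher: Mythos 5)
Your decomposition and all the substantive estimates are correct, and the route is close to, but more streamlined than, the paper's. The paper splits into the cases $r \ge 1$ and $r < 1$: for $r \ge 1$ it reuses the pure-jump machinery of Proposition \ref{prop:jumpType} (the integral bound (\ref{eq:UpperEstimateOfBernsteinFunction}) together with a supremum estimate of the form $\sup_{x\ge0} x(1+x^{d_2/\beta_1})e^{-x}$), while for $r < 1$ it introduces the truncated Bernstein function $\phi^W(\lambda;r)$ of (\ref{eq:truncatedBernsteinFunction}) and proves the claim $\overline{\phi}(\phi_j(r)^{-1}) \le \phi^W(\phi_j(r)^{-1};r) + C/\psi(r)$. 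Your argument is essentially the paper's second case made to work for all $r>0$ at once: your $R_{1a}$ plays the role of $\phi^W$, and your bounds $R_{1b}, R_2 \le C/\psi(r)$ are exactly the paper's claim (\ref{eq:ClaimCompareToTruncatedBernsteinFunction}); the near-diagonal window argument for $J(x,y) \ge C/(V(x,r)\psi(r))$ is also the paper's (it integrates over $[\phi(cr),2\phi(cr)]$, your window $[\phi(c_2r),\phi(Kc_2r)]$ is the same idea). What your version buys is the elimination of the case distinction and of the Gamma-type supremum estimate; you only ever use $1-e^{-s}\le s\wedge 1$.

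The one step that is wrong as stated is the reduction to $c_2 \le 1$: you cannot assume this ``after shrinking $c_2$.'' Decreasing $c_2$ \emph{enlarges} the set $\{t \ge \phi(c_2r)\}$ on which the lower bound in (\ref{eq:diffusion+jumpHKE}) asserts the stronger near-diagonal estimate $p(t,x,y) \ge c_1/V(x,\phi^{-1}(t))$, so it strengthens the hypothesis rather than weakens it, and it cannot be recovered from the off-diagonal bound: in the gap $\phi(c_2'r) \le t < \phi(c_2r)$ one only knows $p(t,x,y) \ge c_1 t/(V(x,r)\phi_j(r))$, and for $r<1$, at $t \approx \phi_c(r)$, the ratio of this quantity to $1/V(x,\phi^{-1}(t))$ is of order $\phi_c(r)/\phi_j(r)$, which tends to $0$ as $r \to 0$ whenever $\phi_c$ and $\phi_j$ have different orders at the origin (e.g.\ $\phi_c(r)=r^2$, $\phi_j(r)=r^\alpha$ with $\alpha<2$); no adjustment of constants rescues this. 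Fortunately your proof does not actually need the reduction: $c_2 \le 1$ is used only to make $[\phi(c_2r),\phi_j(r)]$ a genuine interval, and in the opposite case $\phi(c_2r) > \phi_j(r)$ the argument is easier --- split only at $\phi_j(r)$; the head satisfies $\int_0^{\phi_j(r)}(1-e^{-t/\phi_j(r)})\frac{dt}{t\psi(\phi^{-1}(t))} \le \frac{1}{\phi_j(r)}\int_0^{\phi(c_2r)}\frac{dt}{\psi(\phi^{-1}(t))} = \frac{2V(x,r)}{c_1}\mathrm{(B)}$ because $1-e^{-s}\le s$ holds on all of $[0,\infty)$, and the tail $\int_{\phi_j(r)}^\infty\frac{dt}{t\psi(\phi^{-1}(t))} \le C/\psi(r)$ is your $R_2$ bound verbatim. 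With that fix (equivalently, splitting at $\phi(c_2r)\wedge\phi_j(r)$ from the start) the proof is complete.
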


\begin{proof}
  As in the proof of Lemma \ref{lem:diffusion+jumpTypeUpperBound}, 
  we divide the proof into two cases.
  \begin{description}
    \item[Case 1 ($r \ge 1$).] 
    
    By HK$_{-}(\phi_c,\phi_j)$, the heat kernel is bounded below by
    \begin{equation}
      p(t,x,y) \ge 
      \begin{cases}
        \cfrac{Ct}{V(x,r)\phi_j(r)}, & t < \phi(c_1 r)\\
        \cfrac{C}{V(x,\phi^{-1}(t))}, & \phi(c_1 r) \le t 
      \end{cases}.
      \label{eq:HK-}
    \end{equation}
  Using (\ref{eq:UpperEstimateOfBernsteinFunction}), we have 
  \begin{equation}
    \overline{\phi}(\lambda)  
    \le C \lambda \int_0^{\phi(c_1r)}\frac{1}{\psi(\phi^{-1}(t))}dt
    +  C \lambda \int_{\phi(c_1 r)}^\infty e^{-\lambda t}\frac{1}{\psi(\phi^{-1}(t))}dt
    = \mathrm{(I)} + \mathrm{(II)}.
    \label{eq:UpperEstimateOfBernsteinFunctionAtProof}
  \end{equation}
  Let $\lambda = \phi_j(r)^{-1}$.
  The first term of (\ref{eq:UpperEstimateOfBernsteinFunctionAtProof}) is bounded by
  \begin{equation}
    \begin{split}
      \frac{1}{V(x,r)}\mathrm{(I)} 
      &= C\int_0^{\phi(c_1 r)} \frac{t}{V(x,r)\phi_j(r)} \frac{1}{t\psi(\phi^{-1}(t))}dt\\
    &\le C \int_0^{\phi(c_1 r)} p(t,x,y)\frac{1}{t\psi(\phi^{-1}(t))}dt.
    \end{split}
    \label{eq:UpperEstimateOfBernsteinFunctionAtProof1}
  \end{equation}
  The second term of (\ref{eq:UpperEstimateOfBernsteinFunctionAtProof}) is bounded by
  \begin{equation}
    \begin{split}
      \frac{1}{V(x,r)}\mathrm{(II)} 
      &= C  \int_{\phi(c_1r)}^\infty \lambda t e^{-\lambda t} \frac{V(x,\phi^{-1}(t))}{V(x,r)} \frac{1}{V(x,\phi^{-1}(t))}\frac{1}{t\psi(\phi^{-1}(t))}dt\\
      &\le  C  \int_{\phi(c_1r)}^\infty \frac{1}{V(x,\phi^{-1}(t))}\frac{1}{t\psi(\phi^{-1}(t))}dt\\
    &\le  C \int_{\phi(c_1r)}^\infty p(t,x,y)\frac{1}{t\psi(\phi^{-1}(t))}dt.
    \end{split}
    \label{eq:UpperEstimateOfBernsteinFunctionAtProof2}
  \end{equation}
  Here the first inequality is from 
  \begin{equation}
    \begin{split}
      \lambda t e^{-\lambda t}\frac{V(x,\phi^{-1}(t))}{V(x,r)}
      &\le \lambda t e^{-\lambda t} \left(1_{\{t \le \phi_j(r)\}} + C\left(\frac{t}{\phi_j(r)}\right)^{d_2/\beta_1}1_{\{t > \phi_j(r)\}}\right)\\
      &\le \sup_{x\ge 0}\ x(1 + x^{d_2/\beta_1})e^{-x}, \qquad t > 0,
    \end{split}
  \end{equation}  
  where 
  we use $V(x,\phi^{-1}(r)) \le V(x,r)$ for $t \le \phi(r) = \phi_j(r)$
  and (VD) for $t > \phi_j(r)$.
  Hence, summing up (\ref{eq:UpperEstimateOfBernsteinFunctionAtProof1}) 
  and (\ref{eq:UpperEstimateOfBernsteinFunctionAtProof2}), 
  we have 
  \begin{equation}
    \frac{\overline{\phi}(\phi_j(r)^{-1})}{V(x,r)} 
    \le C \int_{0}^{\infty} p(t,x,y) \frac{dt}{t\psi(\phi^{-1}(t))}
    \le CJ(x,y).
  \label{eq:diffusion+jumpTypeJumpKernelLowerBound1InCase1}
\end{equation}
  Finally, let $c := c_1 \vee 1$ which implies $\phi(cr) = \phi_j(cr)$.
  By Theorem \ref{thm:Okura} and the heat kernel estimate HK$_{-}(\phi_c,\phi_j)$,
  we have 
  \begin{equation}
    \begin{split}
    J(x,y) 
    &\ge C\int_{\phi(c r)}^{2 \phi (c r)} \frac{1}{V(x,\phi^{-1}(t))} \frac{1}{t\psi(\phi^{-1}(t))} dt\\
    &= \frac{C}{V(x,r)\psi(cr)} \frac{V(x,r)}{V(x,c r)} \int_{\phi(cr)}^{2\phi( c r)} \frac{V(x,c r)}{V(x,\phi^{-1}(t))} \frac{\psi(cr)}{t\psi(\phi^{-1}(t))} dt\\
    &\ge \frac{C}{V(x,r)\psi(r)}\int_{\phi(c r)}^{2\phi(c r)} \left(\frac{\phi(c r)}{t}\right)^{d_2/\beta_1 + \gamma_2/\beta_1} \frac{1}{t}dt\\
    &\ge \left(1 - 2^{-(d_2/\beta_1 + \gamma_2/\beta_1)}\right)\frac{C}{V(x,r)\psi(r)}.
  \end{split}
  \label{eq:diffusion+jumpTypeJumpKernelLowerBound2InCase1}
\end{equation}
  By (\ref{eq:diffusion+jumpTypeJumpKernelLowerBound1InCase1})
  and (\ref{eq:diffusion+jumpTypeJumpKernelLowerBound2InCase1}),
   we obtain the desired result.

  \item[Case 2 ($r < 1$).] 
  By HK$_{-}(\phi_c,\phi_j)$, the heat kernel is bounded below by (\ref{eq:HK-}) again.
  Since $1 - e^{-x} \le x$ for $x \ge 0$ and $\phi(r) = \phi_c(r) \wedge \phi_j(r)$,
  we have an estimate 
  for the truncated Bernstetin function,
  \begin{equation}
    \phi^W(\lambda;r) = \int_{0}^{\phi(r)}(1- e^{-\lambda t}) \frac{dt}{t\psi(\phi^{-1}(t))} 
    \le \lambda \int_0^{\phi(r)} \frac{dt}{\psi(\phi^{-1}(t))}.
  \end{equation}
  Therefore, taking $\lambda = \phi_j(r)^{-1}$, we obtain a part of the lower bound 
  \begin{equation}
    \begin{split}
    \frac{1}{V(x,r)}\phi^W (\phi_j(r)^{-1};r)
    &\le C \int_0^{\phi(r)} \frac{t}{V(x,r)\phi_j(r)} \frac{dt}{t\psi(\phi^{-1}(t))} \\
    &\le C \int_0^{\phi(c_1 r)} p(t,x,y) \frac{dt}{t\psi(\phi^{-1}(t))} \le CJ(x,y),
    \end{split}
    \label{eq:diffusion+jumpTypeJumpKernelLowerBound1}
  \end{equation}
  by using the doubling properly of $\phi$ and $\psi$.
  Moreover, by the same reason as in Case 1, we have 
  \begin{equation}
    \begin{split}
    J(x,y) &\ge \frac{C}{V(x,r)\psi(r)}.
  \end{split}
  \label{eq:diffusion+jumpTypeJumpKernelLowerBound2}
\end{equation}
  In order to get the desired lower bound,
   we claim that there exists a constant $C > 0$ such that  
  \begin{equation}
    \overline{\phi}(\phi_j(r)^{-1}) \le \phi^W (\phi_j(r)^{-1};r) + C \frac{1}{\psi(r)}, \quad r \le 1,
    \label{eq:ClaimCompareToTruncatedBernsteinFunction}
  \end{equation}
  holds.
  Indeed, by definition of $\phi^W$ we have 
  \begin{equation}
    \begin{split}
        \overline{\phi}(\phi_j(r)^{-1}) 
        &= \phi^W (\phi_j(r)^{-1};r) 
         + \int_{\phi_c(r)}^{\phi_j(r)} (1 - e^{-\phi_j(r)^{-1} t })\frac{dt}{t\psi(\phi^{-1}(t))}\\
        &\quad + \int_{\phi_j(r)}^\infty  (1 - e^{-\phi_j(r)^{-1} t })\frac{dt}{t\psi(\phi^{-1}(t))}.
    \end{split}
    \label{eq:CompareToTruncatedBernsteinFunction}
  \end{equation}
  The second term of (\ref{eq:CompareToTruncatedBernsteinFunction}) is bounded by 
  \begin{equation}
    \begin{split}
    \int_{\phi_c(r)}^{\phi_j(r)} (1 - e^{-\phi_j(r)^{-1} t })\frac{dt}{t\psi(\phi^{-1}(t))}
    &\le \frac{1}{\phi_j(r)}\int_{\phi_c(r)}^{\phi_j(r)} \frac{dt}{\psi(\phi^{-1}(t))}\\
    &\le \frac{1}{\phi_j(r)}\frac{1}{\psi(r)}\int_{\phi_c(r)}^{\phi_j(r)} dt\\
    &\le \frac{1}{\psi(r)}.
    \end{split}
  \end{equation}
  The third term of (\ref{eq:CompareToTruncatedBernsteinFunction}) is bounded by
  \begin{equation}
    \begin{split}
          \int_{\phi_j(r)}^\infty  (1 - e^{-\phi_j(r)^{-1} t })\frac{dt}{t\psi(\phi^{-1}(t))}
    &\le  \frac{1}{\psi(r)} \int_{\phi_j(r)}^\infty \frac{\psi(r)}{t\psi(\phi^{-1}(t))}dt\\
    &\le \frac{C}{\psi(r)} \int_{\phi_j(r)}^\infty \left(\frac{\phi(r)}{t}\right)^{\gamma_1/\beta_2} \frac{1}{t}dt\\
    &\le \frac{C}{\psi(r)}.
    \end{split}
  \end{equation}
  Hence the claim (\ref{eq:ClaimCompareToTruncatedBernsteinFunction}) is proved.
  Summing up 
  (\ref{eq:diffusion+jumpTypeJumpKernelLowerBound1}), 
  (\ref{eq:diffusion+jumpTypeJumpKernelLowerBound2}) and
  (\ref{eq:ClaimCompareToTruncatedBernsteinFunction}),
  we obtain the desired lower bound.
  \end{description}
\end{proof}

  In Proposition \ref{prop:diffusionType} \cite[Lemma 4.2]{BKKL}, 
  the jump kernel of the Dirichlet form of the subordinate process is 
  estimated by 
  \begin{equation}
    J(x,y) \simeq \frac{1}{V(x,r)\psi(r)},
    \label{eq:jumpKernelEstimateOfDiffusionType}
  \end{equation}
  for the same scale function and the same subordinator
  when the heat kernel of the original process 
  satisfies sub-Gaussian heat kernel estimates.
  It is natural to ask 
  whether (\ref{eq:jumpKernelEstimate}), (\ref{eq:jumpKernelEstimateOfDiffusionType})
  and (\ref{eq:jumpKernelEstimateOfdiffusion+jump})
  are comparable.

  In general, resulting jump kernels of the jump type $\overline{\phi}(\phi_j(r)^{-1})$ 
  and of the  diffusion type $1/\psi(r)$ are not comparable
  (see \cite[Example 1.1 and Lemma 6.1]{CKW2022}).
  For readers' convenience, we discuss this for some concrete case below.

  First, note that 
  the following estimate holds under the assumption in Proposition \ref{prop:diffusion+jumpType},
    \begin{equation}
      \begin{split}        
        \frac{1}{\psi(r)} &\le \overline{\phi}(\phi_j(r)^{-1}),\quad  r \ge 1,
      \end{split}
    \end{equation}
    see \cite[Lemma 4.3]{BKKL}.
\begin{example}
  \label{ex:comparable}
   Let $\beta \ge 2, \gamma_1 \in (0,1), \gamma_2 \in (1,\infty)$,
   and take $\phi(r) := r^\beta, \psi(r) := r^{\gamma_1 \beta} \vee r^{\gamma_2\beta}$.
   Then, 
   \begin{equation}
    \overline{\phi}(\phi_j(r)^{-1}) \simeq r^{-\gamma_1 \beta} \wedge r^{-\beta}
   \end{equation}
   and 
   \begin{equation}
    \frac{1}{\psi(r)} \simeq r^{-\gamma_1 \beta} \wedge r^{-\gamma_2 \beta}.
   \end{equation}
   Clearly, these are not comparable in general.
\end{example}
  A sufficient condition for the comparablilty is   
  there exists $0 < a < 1$ such that  the following holds
  \begin{align}
  \frac{\psi(\phi^{-1}(T))}{\psi(\phi^{-1}(t))} &\le C \left(\frac{T}{t}\right)^a,\quad 0 < t \le T.
  \end{align}

\subsection{Proof of Theorem \ref{thm:diffusion+jump} and the pure jump case}

We will prove Theorem \ref{thm:diffusion+jump}.
The core of the proof is to estimate the jump kernel of the subordinate Dirichlet form
in Proposition \ref{prop:diffusion+jumpType}.
For the proof, we use the following lemma.
\begin{lemma}(\cite[Lemma 3.5]{LM})
  \label{lem:LMLemma3.5}
  Let $\phi,\psi$ be two scale functions.
  Then 
  $\int_{0}^{1}\frac{\phi(s)}{s\psi(s)}ds < \infty$ holds 
  if and only if $\int_{0}^{1}\frac{ds}{\psi(\phi^{-1}(s))} < \infty$ holds.
\end{lemma}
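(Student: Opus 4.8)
The plan is to reduce both integrals to comparable dyadic series and then compare the series directly, exploiting the doubling of $\phi$ and $\psi$. First I would discretize the scale: on each interval $s \in (2^{-(k+1)}, 2^{-k}]$, $k \ge 0$, the doubling property gives $\frac{1}{s} \simeq 2^{k}$, $\phi(s) \simeq \phi(2^{-k})$, $\psi(s) \simeq \psi(2^{-k})$, and the interval has length $\simeq 2^{-k}$. Summing over $k$ yields
\[
  \int_0^1 \frac{\phi(s)}{s\psi(s)}\,ds \simeq \sum_{k \ge 0} \frac{\phi(2^{-k})}{\psi(2^{-k})} =: A.
\]
For the second integral I would instead split the range of $s$ according to the dyadic scale on which $\phi^{-1}(s)$ falls: when $\phi^{-1}(s) \in (2^{-(k+1)}, 2^{-k}]$, equivalently $s \in (\phi(2^{-(k+1)}), \phi(2^{-k})]$, we have $\psi(\phi^{-1}(s)) \simeq \psi(2^{-k})$, and the length of this $s$-interval is $\phi(2^{-k}) - \phi(2^{-(k+1)})$. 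Since $\phi(1) = 1$ and $\phi(2^{-k}) \to 0$, these intervals exhaust $(0,1]$, giving
\[
  \int_0^1 \frac{ds}{\psi(\phi^{-1}(s))} \simeq \sum_{k \ge 0} \frac{\phi(2^{-k}) - \phi(2^{-(k+1)})}{\psi(2^{-k})} =: B.
\]

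The remaining task is to show $A \simeq B$. The bound $B \le A$ is immediate, since $\phi(2^{-k}) - \phi(2^{-(k+1)}) \le \phi(2^{-k})$ term by term. For the reverse bound I would telescope $\phi(2^{-k}) = \sum_{j \ge k}\bigl(\phi(2^{-j}) - \phi(2^{-(j+1)})\bigr)$ (using $\phi(0)=0$ and continuity), interchange the order of summation by Tonelli, and arrive at
\[
  A = \sum_{j \ge 0}\bigl(\phi(2^{-j}) - \phi(2^{-(j+1)})\bigr)\sum_{0 \le k \le j}\frac{1}{\psi(2^{-k})}.
\]
The inner sum is then controlled by the lower scaling exponent of $\psi$: for $k \le j$ one has $\psi(2^{-k}) \ge C^{-1} 2^{(j-k)\alpha_1}\psi(2^{-j})$, so $\sum_{0 \le k \le j}\frac{1}{\psi(2^{-k})} \le \frac{C}{\psi(2^{-j})}\sum_{m \ge 0}2^{-m\alpha_1} = \frac{C'}{\psi(2^{-j})}$, the geometric series converging because $\alpha_1 > 0$. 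This gives $A \le C' B$, and therefore $A \simeq B$; consequently one integral is finite if and only if the other is, which is the assertion.

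The main obstacle is the direction $A \le C B$. While $B \le A$ holds trivially term by term, the reverse comparison is not termwise: it requires redistributing the mass via the telescoping identity and the summation interchange, and it genuinely relies on the strictly positive lower exponent $\alpha_1$ of $\psi$ to sum the resulting geometric series. If $\psi$ had no such lower scaling, the inner sums $\sum_{k \le j}1/\psi(2^{-k})$ could grow comparably to the number of terms rather than stay bounded by $C'/\psi(2^{-j})$, and the two integrals would cease to be comparable; it is precisely the scale-function hypothesis $LU(\alpha_1,\alpha_2)$ with $\alpha_1 > 0$ that makes the equivalence hold.
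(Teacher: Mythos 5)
Your proof is correct, but it takes a genuinely different route from the paper. The paper's argument is a two-line change of variables: it sets $\Phi(t) := \int_0^t \frac{\phi(r)}{r}\,dr$, observes that $\Phi \simeq \phi$ (this is where the lower scaling exponent of $\phi$ enters) while $\Phi' (r)= \phi(r)/r$ exactly, and then substitutes $u = \Phi(s)$ to turn $\int_0^1 \frac{\phi(s)}{s\psi(s)}\,ds$ directly into $\int_0^{\Phi(1)} \frac{du}{\psi(\Phi^{-1}(u))} \simeq \int_0^1 \frac{du}{\psi(\phi^{-1}(u))}$, the last comparison using the scaling of $\phi$ and the doubling of $\psi$. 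The point of introducing $\Phi$ is regularization: $\phi$ itself need not be differentiable, but $\Phi$ is absolutely continuous with the right density, so the substitution is legitimate. Your dyadic approach replaces this by discretizing both integrals into the series $A = \sum_k \phi(2^{-k})/\psi(2^{-k})$ and $B = \sum_k \bigl(\phi(2^{-k})-\phi(2^{-(k+1)})\bigr)/\psi(2^{-k})$, and proving $A \simeq B$ via telescoping, Tonelli, and the geometric series coming from the lower scaling exponent $\alpha_1 > 0$ of $\psi$ (a discrete summation-by-parts). What the paper's route buys is brevity and the avoidance of series bookkeeping; what yours buys is that it is entirely elementary, makes every use of the scaling hypotheses explicit, and isolates exactly where a positive lower exponent is indispensable — in your argument it is $\psi$'s, in the paper's it is $\phi$'s, and both are available under the standing $LU$ assumptions. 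Both arguments in fact establish the stronger statement that the two integrals are comparable up to multiplicative constants, not merely simultaneously finite.
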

\begin{proof}
  Put $\Phi(t) := \int_0^t \frac{\phi(r)}{r}dr$.
  Then $\Phi \simeq \phi$ and $\frac{d\Phi}{dr}\simeq \frac{\phi(r)}{r}$.
  Hence we conclude the equivalence by the change of variables.
\end{proof}

\begin{proof}[Proof of Theorem \ref{thm:diffusion+jump}]
  (b) implies (a) is obvious.
  (a) implies $\int_{(0,1)} \frac{dt}{\psi(\phi^{-1}(t))} < \infty$.
  By Lemma \ref{lem:LMLemma3.5} this is equivalent to (c).
  Assume (c). By Lemma \ref{lem:LMLemma3.5}, we can take a subordinator of (\ref{eq:Bernstein function}).
  Then, Proposition \ref{prop:diffusion+jumpType} implies (b).
\end{proof}

If we assume that the 
original Dirichlet form satisfies the pure jump type heat kernel estimates HK$(\phi_j)$ 
instead of the diffusion+jump type heat kernel estimates HK$_-(\phi_c,\phi_j)$,
the same result holds.
The following corollary is a variant of Theorem \ref{thm:diffusion+jump}

\begin{corollary}
  \label{cor:jump}
  Let $(\Di,\calF)$ be a regular non-killing Dirichlet form on $L^2(M;\mu)$ that 
  satisfies the jump type heat kernel estimates HK$(\phi_j)$.
  $X$ denotes the $\mu$-symmetric Hunt process corresponding to $(\Di,\calF)$
  and set $\phi := \phi_j$.
  Given a scale function $\psi$, the following are equivalent. 
  \begin{enumerate}[(a)]
    \item There exists a pure jump regular Dirichlet form $(\Di^j,\calF^j)$  whose jump kernel satisfies (\ref{eq:jumpKernelEstimate}).
    \item There exists a subordinator $S_t$\ such that the jump kernel of the subordinate process $X_{S_t}$ satisfies  (\ref{eq:jumpKernelEstimate}).
    \item The scale function $\psi$ satisfies
    \begin{equation}
      \int_0^1 \frac{\phi(s)}{s\psi(s)}ds < \infty.
    \end{equation}
  \end{enumerate}
\end{corollary}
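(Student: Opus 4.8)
The plan is to reproduce the proof of Theorem \ref{thm:diffusion+jump} verbatim in structure, simply replacing the diffusion+jump input Proposition \ref{prop:diffusion+jumpType} by its pure-jump counterpart Proposition \ref{prop:jumpType}, and the target estimate (\ref{eq:jumpKernelEstimateOfdiffusion+jump}) by (\ref{eq:jumpKernelEstimate}); throughout one takes $\phi = \phi_j$ as prescribed in the statement. I would establish the cycle (b) $\Rightarrow$ (a) $\Rightarrow$ (c) $\Rightarrow$ (b).

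First, (b) $\Rightarrow$ (a) is immediate. Since $(\Di,\calF)$ satisfies HK$(\phi_j)$, it has a heat kernel and is conservative (Remark \ref{rem:conservative}), so Theorem \ref{thm:Okura} applies: the subordinate process $X_{S_t}$ corresponds to a \emph{regular pure-jump} Dirichlet form $(\Di^S,\calF^S)$, with jump kernel $\tfrac{1}{2}\int_0^\infty p(t,x,y)\,d\nu(t)$. By the hypothesis of (b) this kernel satisfies (\ref{eq:jumpKernelEstimate}), so one may take $(\Di^j,\calF^j) = (\Di^S,\calF^S)$.

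Next I would treat (a) $\Rightarrow$ (c). The estimate (\ref{eq:jumpKernelEstimate}) can describe the jump kernel of a genuine regular Dirichlet form only if $\overline{\phi}$ is finite: the jump kernel of a pure-jump regular Dirichlet form is finite for $\mu\times\mu$-a.e.\ $(x,y)$, so comparability in (\ref{eq:jumpKernelEstimate}) forces $\overline{\phi}(\phi_j(r)^{-1}) < \infty$ for some, hence every, $r>0$. Inspecting (\ref{eq:Bernstein function}) and using $1 - e^{-\lambda t} \simeq \lambda t$ as $t \to 0$, the finiteness of $\overline{\phi}(\lambda)$ is equivalent to $\int_0^1 \frac{dt}{\psi(\phi^{-1}(t))} < \infty$, the tail $\int_1^\infty \frac{dt}{t\psi(\phi^{-1}(t))}$ converging automatically from the $LU$ bounds on $\psi$ and $\phi$. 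By Lemma \ref{lem:LMLemma3.5}, $\int_0^1 \frac{dt}{\psi(\phi^{-1}(t))} < \infty$ is equivalent to (c).

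Finally, (c) $\Rightarrow$ (b): assuming (c), Lemma \ref{lem:LMLemma3.5} yields $\int_0^1 \frac{dt}{\psi(\phi^{-1}(t))} < \infty$, whence the standing integrability assumption of Section \ref{sec:EstimatesOfTheSubordinateJumpKernel} holds and the subordinator $S$ with Bernstein function (\ref{eq:Bernstein function}) exists. Subordinating $X$ by $S$, Theorem \ref{thm:Okura} identifies the associated pure-jump Dirichlet form, and Proposition \ref{prop:jumpType} shows its jump kernel satisfies (\ref{eq:jumpKernelEstimate}), which is exactly (b). The only step demanding genuine care is (a) $\Rightarrow$ (c)—pinning down that existence of the Dirichlet form really forces finiteness of $\overline{\phi}$, equivalently the small-time integrability of $1/\psi(\phi^{-1}(\cdot))$; everything else is bookkeeping transferred from the proof of Theorem \ref{thm:diffusion+jump}.
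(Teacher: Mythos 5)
Your proposal is correct and takes essentially the same route as the paper: the paper's proof of Corollary \ref{cor:jump} is literally ``the same as Theorem \ref{thm:diffusion+jump},'' i.e.\ the cycle (b) $\Rightarrow$ (a) $\Rightarrow$ (c) $\Rightarrow$ (b) with Proposition \ref{prop:jumpType} substituted for Proposition \ref{prop:diffusion+jumpType} and $\phi = \phi_j$. Your expanded justification of (a) $\Rightarrow$ (c) --- that a.e.\ finiteness of the jump kernel forces $\overline{\phi}(\phi_j(r)^{-1}) < \infty$, hence $\int_0^1 \frac{dt}{\psi(\phi^{-1}(t))} < \infty$, which Lemma \ref{lem:LMLemma3.5} converts into (c) --- is precisely the step the paper states without elaboration.
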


\begin{proof}
  The proof is the same as that of 
  Theorem \ref{thm:diffusion+jump}.
\end{proof}

\bigskip
\noindent \textbf{Acknowledgements.}\ 
The author would like to 
express his deep gratitude to Prof. Takashi Kumagai for
discussions and feedbacks on the whole paper.
The author thanks 
Prof. Naotaka Kajino,
Prof. Mathav Murugan,
Dr. Ryosuke Shimizu
and Prof. Yuichi Shiozawa
for helpful comments on the first draft.

\smallskip
\noindent \textbf{Ryuto Kushida:}\ 

\smallskip \noindent
Department of Pure and Applied Mathematics, Graduate School of Fundamental Science and Engineering, Waseda University,
3-4-1 Okubo, Shinjuku, Tokyo 169-8555, Japan.

\noindent E-mail: \texttt{otuyrdiades@ruri.waseda.jp}


\begin{thebibliography}{99}
  \bibitem{BKKL2019}
  J. Bae, J. Kang, P. Kim, J. Lee, Heat kernel estimates for symmetric jump processes with mixed polynomial growths, Ann. Probab. {\bf 47} (2019), no.~5, 2830--2868; MR4021238

  \bibitem{BKKL}
  J. Bae, J. Kang, P. Kim, J. Lee, Heat kernel estimates and their stabilities for symmetric jump processes with general mixed polynomial growths on metric measure spaces, arXiv:1904.10189v3.
  
  \bibitem{B}
  M.~T. Barlow, Diffusions on fractals, in {\it Lectures on probability theory and statistics (Saint-Flour, 1995)}, 1--121, Lecture Notes in Math., 1690, Springer, Berlin, ; MR1668115
  
  \bibitem{BB1989}
  M.~T. Barlow and R.~F. Bass, The construction of Brownian motion on the Sierpi\'nski carpet, Ann. Inst. H. Poincar\'e{} Probab. Statist. {\bf 25} (1989), no.~3, 225--257; MR1023950

  \bibitem{BB}
  M.~T. Barlow and R.~F. Bass, Brownian motion and harmonic analysis on Sierpinski carpets, Canad. J. Math. {\bf 51} (1999), no.~4, 673--744; MR1701339
  
  \bibitem{BBCK}
  M.~T. Barlow, R.~F. Bass, Z.-Q. Chen, M. Kassmann, Non-local Dirichlet forms and symmetric jump processes, Trans. Amer. Math. Soc. {\bf 361} (2009), no.~4, 1963--1999; MR2465826

  \bibitem{BBK}
  M.~T. Barlow, R.~F. Bass and T. Kumagai, Stability of parabolic Harnack inequalities on metric measure spaces, J. Math. Soc. Japan {\bf 58} (2006), no.~2, 485--519; MR2228569

  \bibitem{BP}
  M.~T. Barlow and E.~A. Perkins, Brownian motion on the Sierpi\'nski gasket, Probab. Theory Related Fields {\bf 79} (1988), no.~4, 543--623; MR0966175
  
  \bibitem{BSS}
  K. Bogdan, A. St\'os and P. Sztonyk, Harnack inequality for stable processes on $d$-sets, Studia Math. {\bf 158} (2003), no.~2, 163--198; MR2013738
  
  \bibitem{CK2003}
  Z.-Q. Chen and T. Kumagai, Heat kernel estimates for stable-like processes on $d$-sets, Stochastic Process. Appl. {\bf 108} (2003), no.~1, 27--62; MR2008600

  \bibitem{CK2008}
  Z.-Q. Chen and T. Kumagai, Heat kernel estimates for jump processes of mixed types on metric measure spaces, Probab. Theory Related Fields {\bf 140} (2008), no.~1-2, 277--317; MR2357678

  \bibitem{CK2010}
  Z.-Q. Chen and T. Kumagai, A priori H\"older estimate, parabolic Harnack principle and heat kernel estimates for diffusions with jumps, Rev. Mat. Iberoam. {\bf 26} (2010), no.~2, 551--589; MR2677007

  \bibitem{CKW2020}
  Z.-Q. Chen, T. Kumagai and J. Wang, Heat kernel estimates and parabolic Harnack inequalities for symmetric Dirichlet forms, Adv. Math. {\bf 374} (2020), 107269, 71 pp.; MR4157572
  
  \bibitem{CKW2021}
  Z.-Q. Chen, T. Kumagai and J. Wang, Stability of heat kernel estimates for symmetric non-local Dirichlet forms, Mem. Amer. Math. Soc. {\bf 271} (2021), no.~1330, v+89 pp.; MR4300221

  \bibitem{CKW2022}
  Z.-Q. Chen, T. Kumagai and J. Wang, Heat kernel estimates for general symmetric pure jump Dirichlet forms, Ann. Sc. Norm. Super. Pisa Cl. Sci. (5) {\bf 23} (2022), no.~3, 1091--1140; MR4497742
  
  \bibitem{FOT}
  M. Fukushima, Y. \=Oshima and M. Takeda, {\it Dirichlet forms and symmetric Markov processes}, second revised and extended edition, 
  De Gruyter Studies in Mathematics, 19, de Gruyter, Berlin, 2011; MR2778606
  
  \bibitem{GT}
  A.~A. Grigor'yan and A. Telcs, Two-sided estimates of heat kernels on metric measure spaces, Ann. Probab. {\bf 40} (2012), no.~3, 1212--1284; MR2962091
  
  \bibitem{KM2012}
  P. Kim and A. Mimica, Harnack inequalities for subordinate Brownian motions, Electron. J. Probab. {\bf 17} (2012), no. 37, 23 pp.; MR2928720

  \bibitem{KM2014}
  P. Kim and A. Mimica, Green function estimates for subordinate Brownian motions: stable and beyond, Trans. Amer. Math. Soc. {\bf 366} (2014), no.~8, 4383--4422; MR3206464
  
  \bibitem{KSV}
  P. Kim, R. Song and Z. Vondra\v cek, Potential theory of subordinate Brownian motions revisited, in {\it Stochastic analysis and applications to finance}, 243--290, Interdiscip. Math. Sci., 13, World Sci. Publ., Hackensack, NJ, ; MR2986850
  
  \bibitem{KSV2}
  P. Kim, R. Song and Z. Vondra\v cek, Uniform boundary Harnack principle for rotationally symmetric L\'evy processes in general open sets, Sci. China Math. {\bf 55} (2012), no.~11, 2317--2333; MR2994122
  
  \bibitem{K}
  T. Kumagai, Some remarks for stable-like jump processes on fractals, in {\it Fractals in Graz 2001}, 185--196, Trends Math., Birkh\"auser, Basel, ; MR2091704

  \bibitem{LM}
  G. Liu and M.~K. Murugan, On the comparison between jump processes and subordinated diffusions, ALEA Lat. Am. J. Probab. Math. Stat. {\bf 20} (2023), no.~2, 1271--1281; MR4683374
  
  \bibitem{M}
  A. Mimica, Heat kernel estimates for subordinate Brownian motions, Proc. Lond. Math. Soc. (3) {\bf 113} (2016), no.~5, 627--648; MR3570240

  \bibitem{MS}
  M.~K. Murugan and L. Saloff-Coste, Heat kernel estimates for anomalous heavy-tailed random walks, Ann. Inst. Henri Poincar\'e{} Probab. Stat. {\bf 55} (2019), no.~2, 697--719; MR3949950

  \bibitem{Okura}
  H. \^Okura, Recurrence and transience criteria for subordinated symmetric Markov processes, Forum Math. {\bf 14} (2002), no.~1, 121--146; MR1880197

  \bibitem{SV}
  R. Song and Z. Vondra\v cek, Parabolic Harnack inequality for the mixture of Brownian motion and stable process, Tohoku Math. J. (2) {\bf 59} (2007), no.~1, 1--19; MR2321989

\end{thebibliography}
\end{document}